\documentclass[11pt]{amsart}

\usepackage[T1]{fontenc}
\usepackage{lmodern}
\usepackage{mathtools,amssymb}
\usepackage{microtype}
\usepackage[hidelinks]{hyperref}

\allowdisplaybreaks
\numberwithin{equation}{section}
\numberwithin{figure}{section}
\numberwithin{table}{section}
\newtheorem{theorem}{Theorem}[section]
\newtheorem{proposition}[theorem]{Proposition}
\newtheorem{lemma}[theorem]{Lemma}
\newtheorem{corollary}[theorem]{Corollary}
\theoremstyle{definition}

\theoremstyle{remark}

\newtheorem*{remarkx}{Remark}

\newcommand{\R}{\mathbb R}

\title{Mixed Radial Volume Comparison under Spectral Ricci Bounds}
\author{Han Hong}
\address{Department of Mathematics and Statistics \\
Beijing Jiaotong University \\
Beijing, China 100044}
\email{hanhong@bjtu.edu.cn}

\author{Gaoming Wang}
\address{Beijing Institute of Mathematical Sciences and Applications \\
Beijing, China 100044}
\email{gaomingwang@bimsa.cn}
\date{}

\begin{document}

\begin{abstract}
Let $(M^n,g)$ be complete, let $u>0$, and assume
\[
 \operatorname{Ric}_g-\alpha\frac{\Delta_g u}{u}g
 \ge (n-1)\kappa g.
\]
We introduce mixed radial balls associated with the conformal metric
$u^{2\alpha}g$.
For these mixed radial balls, we obtain a space-form-sharp model comparison and
polynomial weighted volume growth without pointwise bounds on $u$.
As applications, we give a radial derivation  of the spectral Bonnet--Myers
and sharp volume theorem of Antonelli--Xu, and a short volume growth
derivation of the stable Bernstein theorem in $\R^4$.
\end{abstract}

\maketitle

\section{Introduction}

Let $(M^n,g)$ be a complete Riemannian manifold with $n\ge3$, let $u$ be a
smooth positive function on $M$, and suppose that, for some $\alpha>0$ and
$\kappa\in\mathbb R$,
\begin{equation}\label{eq:spectral}
 \operatorname{Ric}_g-\alpha\frac{\Delta_g u}{u}g
 \ge (n-1)\kappa g.
\end{equation}
Spectral Ricci inequalities of this type arise naturally from stability and
the Gauss equations for minimal hypersurfaces under bi-Ricci curvature
bounds; see Shen and Ye~\cite{ShenYe1996}.
They were later studied as curvature assumptions in their own right, leading
to further geometric estimates; see Carron and Rose~\cite{CarronRose2021}.
Recent progress includes sharp global volume and diameter comparison
\cite{ChodoshLiMinterStryker2026,AntonelliXu2024}, sharp splitting and
criticality theorems with applications to the topology of stable minimal
hypersurfaces
\cite{AntonelliPozzettaXu2024,CatinoMariMastroliaRoncoroni2024,
HongWangBoundary2026}.
Other recent developments concern volume comparison with boundary,
band-width estimates, and rigidity or flexibility
\cite{JiaLi2025,ChaiSunBand2026,AntonelliXuConnectedSum2026,ChaiSun2026,
AntonelliLiSweeney2026,HongWangEnds2026}.
Related spectral comparison results have also been established for
Bakry--\'Emery Ricci tensors \cite{ChuHao2026,Wu2026}.
In our previous joint work~\cite{HongWangBoundary2026}, the second variation
of weighted geodesics was used to study geometric properties of manifolds
under spectral Ricci bounds.
Motivated by that approach, in the present paper we develop comparison
geometry in a weighted radial setting.
In particular, we derive comparison estimates for a weighted radial Jacobian
and use them to study weighted volume growth on mixed radial domains.

This weighted radial comparison is organized around the conformal metric
$h_\alpha=u^{2\alpha}g$, which is adapted to the scalar spectral term in
\eqref{eq:spectral}.
Along its radial geodesics, classical geodesic-polar comparison is obstructed
by the derivatives of $u$ that enter the radial Riccati equation.
We compensate for these terms by weighting both the radial Jacobian and the
radial parameter with suitable powers of $u$.
The resulting weighted Jacobian satisfies an exact Riccati formula.
Two different estimates derived from this formula produce complementary
volume growth conclusions for mixed radial domains, while keeping the
positive curvature term produces an independent radial
cutoff.

We begin by defining the sets on which the comparison is carried out.
Fix $o\in M$ and let $\mathbb S_oM$ be the $g$-unit sphere in $T_oM$.
For $\theta\in\mathbb S_oM$, let $\gamma_\theta$ be the image of the
$h_\alpha$-geodesic from $o$ in direction $\theta$, parametrized by
$g$-arclength $s$.
Let $c_\alpha(\theta)$ denote its first $h_\alpha$-cut time in the
$s$-parameter, truncated also at the maximal existence time when
$h_\alpha$ is incomplete.
For $p\in\mathbb R$, set
$\rho_p(s,\theta):=\int_0^s u^p(\gamma_\theta(t))\,dt$, and define the mixed
radial ball
\begin{equation*}
 \Omega_R^{\alpha,p}(o)
 :=\left\{
 \gamma_\theta(s):
 \theta\in\mathbb S_oM,\quad
 0\le s<c_\alpha(\theta),\quad
 \rho_p(s,\theta)<R
 \right\}.
\end{equation*}
The choice $p=0$ gives the $g$-arclength mixed radial ball
\begin{equation*}
 \Omega_R^\alpha(o)
 :=\Omega_R^{\alpha,0}(o)
 =\left\{
 \gamma_\theta(s):
 \theta\in\mathbb S_oM,\quad
 0\le s<\min\{R,c_\alpha(\theta)\}
 \right\}.
\end{equation*}
If $p=\alpha$ and $h_\alpha$ is complete, then
$\Omega_R^{\alpha,\alpha}(o)$ agrees with the genuine metric ball
$B_R^{h_\alpha}(o)$ up to the cut locus.
Thus the notation $\Omega$ distinguishes the general radial construction
from an ordinary metric ball.

For $\kappa\in\mathbb R$, let $s_\kappa$ solve
$s_\kappa''+\kappa s_\kappa=0$, with $s_\kappa(0)=0$ and
$s_\kappa'(0)=1$.
If $s_\kappa$ has a positive zero, let $\overline s_\kappa$ be its zero
extension after the first one; otherwise set
$\overline s_\kappa=s_\kappa$.
Write $\omega_{n-1}=|\mathbb S^{n-1}|$.
Set $\eta:=(n-3)\alpha/(n-1)$, and denote the model volume by
$V_\kappa^n(R):=\omega_{n-1}\int_0^R\overline
s_\kappa^{\,n-1}(t)\,dt$.

\subsection{Main results}

The first result is a scale-invariant comparison for the $g$-arclength
member of the mixed radial family.

\begin{theorem}\label{thm:model-growth}
Assume $\kappa\ge0$ and $0<\alpha\le(n-1)/(n-2)$.
For $R>0$, let $m_R:=\inf_{\Omega_R^\alpha(o)} u$.
Then $m_R>0$ and
\begin{equation}\label{eq:model-growth}
 \int_{\Omega_R^\alpha(o)}
 \left(\frac{u}{u(o)}\right)^{2\alpha/(n-1)}dV_g
 \le
 \left(\frac{u(o)}{m_R}\right)^{n\eta}V_\kappa^n(R)
 \qquad(R>0).
\end{equation}
If $o$ is a global minimum point of $u$, then
\begin{equation}\label{eq:minimum-growth}
 \operatorname{Vol}_g\bigl(\Omega_R^\alpha(o)\bigr)
 \le V_\kappa^n(R).
\end{equation}
\end{theorem}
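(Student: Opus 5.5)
Let $\gamma=\gamma_\theta$ be the $h_\alpha$-geodesic from $o$, parametrized by $g$-arclength $s$, and write $f=\log u$. Then $|\dot\gamma|_g=1$ and $\nabla_{\dot\gamma}\dot\gamma=\alpha(\nabla f-\langle\nabla f,\dot\gamma\rangle\dot\gamma)$. We use the $g$-polar coordinates $(s,\theta)\mapsto\gamma_\theta(s)$ for $s<c_\alpha(\theta)$; by the definition of the cut time this map is a diffeomorphism onto its image. We write $dV_g=J(s,\theta)\,ds\,d\theta$ and let $H=\partial_s\log J$ be the $g$-divergence of the radial field $T=\dot\gamma$.

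\textbf{Step 1: Riccati inequality.} Differentiate $H=\operatorname{div}_gT$ along $\gamma$. Since $T$ is not $g$-geodesic, the evolution contains $\nabla_T(\operatorname{div}T)=-|\nabla T|^2-\operatorname{Ric}(T,T)+\operatorname{div}(\nabla_TT)$. Here $\nabla_TT=\alpha\nabla^\perp f$, so $\operatorname{div}(\nabla_TT)=\alpha\Delta f-\alpha\nabla^2f(T,T)-\alpha H\,f'+\dots$, with $f'=\partial_sf$. Now insert \eqref{eq:spectral}:
\[
\operatorname{Ric}(T,T)\ge (n-1)\kappa+\alpha\frac{\Delta u}{u}=(n-1)\kappa+\alpha\Delta f+\alpha|\nabla f|^2 .
\]
The terms $\alpha\Delta f$ cancel. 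Split $\nabla T$ into its trace part $H/(n-1)$ and a traceless part. The traceless part, together with the mixed terms involving $\nabla^\perp f$, is absorbed using $|\nabla f|^2\ge|\nabla^\perp f|^2$. This absorption needs a Cauchy--Schwarz step whose constant is what produces the restriction $\alpha\le (n-1)/(n-2)$. The expected outcome is an inequality of the form
\[
H'\le -\frac{(H-a f')^2}{n-1}-(n-1)\kappa-\alpha f''+b(f')^2,
\]
with explicit constants $a,b$ depending on $n$ and $\alpha$. Next set $\phi=J^{1/(n-1)}u^{-\beta}$, choosing $\beta$ (I expect $\beta$ related to $\eta$) so that the $f''$ and $(f')^2$ terms cancel exactly. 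Then $\phi''+\kappa\phi\le0$, with $\phi\sim s$ as $s\to0$, after normalizing by $u(o)^\beta$. This exact completion of the square is the main obstacle. If exact cancellation fails, I would first check whether a leftover term $c(f')^2$ with $c\le0$ survives precisely when $\alpha\le (n-1)/(n-2)$.

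\textbf{Step 2: Sturm comparison.} Since $\kappa\ge0$, the standard Sturm argument gives $\phi(s)\le \overline s_\kappa(s)$ for $s<c_\alpha(\theta)$. In particular $\phi$ vanishes, so $J$ vanishes, beyond the first zero of $s_\kappa$. Thus
\[
J(s,\theta)\le \left(\frac{u(\gamma(s))}{u(o)}\right)^{(n-1)\beta}\overline s_\kappa^{\,n-1}(s).
\]

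\textbf{Step 3: integrate.} Multiply by $(u/u(o))^{2\alpha/(n-1)}$ and integrate over $s<\min\{R,c_\alpha\}$ and $\theta\in\mathbb S_oM$. Provided the resulting total exponent of $u/u(o)$ equals $-n\eta$, it can be bounded by $(u(o)/m_R)^{n\eta}$, which gives \eqref{eq:model-growth}. Pinning down this exponent is the bookkeeping check to perform in Step 1; if a different power appears, the weight exponent in the statement tells us which $\beta$ to aim for. Positivity $m_R>0$ follows because $\Omega_R^\alpha(o)$ lies in the compact set $\exp^{h_\alpha}$-image of $\{s\le R\}$: the $h_\alpha$-geodesics have $g$-length at most $R$, so $\overline{\Omega_R^\alpha(o)}\subset\overline{B^g_R(o)}$, which is compact by completeness of $g$. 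Finally, if $o$ is a global minimum of $u$, then $u/u(o)\ge1$ and $m_R=u(o)$. The left side of \eqref{eq:model-growth} then dominates $\operatorname{Vol}_g$ and the prefactor is $1$, which gives \eqref{eq:minimum-growth}.
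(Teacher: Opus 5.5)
There is a genuine gap in your Step~1: for $n\ge4$ no choice of $\beta$ makes $\phi=J^{1/(n-1)}u^{-\beta}$ satisfy $\phi''+\kappa\phi\le0$. In your inequality for $H=\partial_s\log J$, the term $-\alpha f''$ has no sign, so it must cancel exactly. That forces $\beta=-\alpha/(n-1)$, i.e.\ $\phi=F:=(u^\alpha J)^{1/(n-1)}$. For this $F$, the exact Riccati identity (Proposition~\ref{prop:riccati}) gives only
\[
 F''+\eta f'F'+\kappa F\le-\frac{C_\alpha\alpha^2}{n-1}(f')^2F,
 \qquad
 \eta=\frac{(n-3)\alpha}{n-1},\quad C_\alpha=\frac1\alpha-\frac{n-2}{n-1}.
\]
What survives is therefore the sign-indefinite first-order term $\eta f'F'$, not a $c(f')^2$ term. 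Removing it with a further power of $u$ brings $f''$ back, so your fallback does not apply. (The hypothesis $\alpha\le(n-1)/(n-2)$ is simply $C_\alpha\ge0$; no Cauchy--Schwarz step is involved.)

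In fact no argument of this shape can work. Suppose $\phi$ is positive before the cut time, vanishes at the pole with positive slope, and satisfies $\phi''+\kappa\phi\le0$ in the $g$-arclength variable. Sturm comparison would then force $c_\alpha(\theta)\le\pi/\sqrt\kappa$, hence $\operatorname{diam}_g(M)\le\pi/\sqrt\kappa$. But the examples of Proposition~\ref{prop:diameter-sharpness}, with $n\ge4$ and $\alpha<4/(n-1)$ (which lies in your range), have diameters tending to $\pi/\sqrt{a_\alpha(n-1)\kappa}$, with $a_\alpha$ as in \eqref{eq:positive-a-alpha}. Since $(n-1)^2>4(n-2)$ for $n\ne3$, we have $a_\alpha(n-1)<1$, so this limit exceeds $\pi/\sqrt\kappa$. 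Your pointwise bound in Step~2 is therefore false for $n\ge4$. The plan works only for $n=3$, where $\eta=0$.

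The missing idea is to keep the cross term as a weight rather than eliminate it. Multiplying the display by $U^\eta$, with $U=u\circ\gamma$, and dropping the right side gives $(U^\eta F')'+\kappa U^\eta F\le0$. Reparametrizing by $z=\int_0^sU^{-\eta}\,dt$ turns this into $F_{zz}+\kappa U^{2\eta}F\le0$. On $\Omega_R^\alpha(o)$ one has $U\ge m_R$. Since $\kappa\ge0$, Sturm comparison yields $F\le u^{\alpha/(n-1)+\eta}(o)\,\overline s_{\kappa m_R^{2\eta}}(z)$, and $s<R$ gives $z<R/m_R^\eta$. Because $U^{2\alpha/(n-1)}J=U^{-\eta}F^{n-1}$ and $U^{-\eta}ds=dz$, you can integrate and use the rescaling $s_{\kappa m^{2\eta}}(z)=m^{-\eta}s_\kappa(m^\eta z)$. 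This produces $u^{(n-2)\alpha}(o)\,m_R^{-n\eta}V_\kappa^n(R)$. So the factor $(u(o)/m_R)^{n\eta}$ comes from the change of radial variable and the rescaling of the model, not from a pointwise power of $u/u(o)$ as you planned in Step~3. Your arguments for $m_R>0$ and for the minimum-point case are fine.
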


The second result establishes polynomial weighted-volume growth for mixed
radial balls in an explicit parameter range.
Call a triple $(\alpha,p,\sigma)$ admissible if
$0<\alpha<(n-1)/(n-2)$ and
\begin{align}
 \bigl((n-1)p+(n-3)\alpha\bigr)^2
 &<4\alpha\bigl((n-1)-(n-2)\alpha\bigr),
 \notag\\
 \bigl((n-1)\sigma-2\alpha\bigr)^2
 &<4n\alpha\bigl((n-1)-(n-2)\alpha\bigr).
 \label{eq:admissible-sigma}
\end{align}
The associated explicit exponent $K(\alpha,p,\sigma)$ is given in
\eqref{eq:K} below.

\begin{theorem}\label{thm:polynomial}
Assume $\kappa\ge0$.
For every admissible triple $(\alpha,p,\sigma)$ there is a constant
$C=C(n,o,u,\alpha,p,\sigma)>0$ such that
\begin{equation}\label{eq:polynomial-growth}
 \int_{\Omega_R^{\alpha,p}(o)}u^\sigma\,dV_g
 \le C\left(1+R^{K(\alpha,p,\sigma)}\right)
 \qquad(R>0).
\end{equation}
\end{theorem}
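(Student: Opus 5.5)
We will work in $h_\alpha$-geodesic polar coordinates about $o$, parametrized by $g$-arclength $s$ along $\gamma_\theta$. Write $dV_g=\mathcal J(s,\theta)\,ds\,d\theta$ for $0\le s<c_\alpha(\theta)$. Since $\Omega^{\alpha,p}_R(o)$ is the image of $\{\rho_p(s,\theta)<R\}$, we change the radial variable to $r=\rho_p(s,\theta)$, so that $ds=u^{-p}\,dr$. This gives
\[
\int_{\Omega_R^{\alpha,p}(o)}u^\sigma\,dV_g=\int_{\mathbb S_oM}\int_0^{R}\bigl(u^{\sigma-p}\mathcal J\bigr)(r,\theta)\,dr\,d\theta .
\]
The goal is then a pointwise polynomial bound $u^{\sigma-p}\mathcal J\le C(\theta)(1+r)^{K-1}$, with $C(\theta)$ depending only on the data at $o$. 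We will take as given the exact Riccati formula for the weighted radial Jacobian developed for Theorem~\ref{thm:model-growth}. After applying \eqref{eq:spectral} with $\kappa\ge0$ and dropping the curvature term, that formula takes the form
\[
\frac{d}{dr}\Bigl(\frac{d}{dr}\log(u^{a}\mathcal J^{b})\Bigr)\le -\frac{c}{\bigl(\cdots\bigr)}\Bigl(\frac{d}{dr}\log(u^{a}\mathcal J^{b})\Bigr)^2+(\text{terms in } (\partial_r\log u)^2,\ \partial_r\log u\cdot\partial_r\log\mathcal J).
\]
Here the $u$-derivative terms come from the conformal change $h_\alpha=u^{2\alpha}g$ and from the Laplacian term $\alpha\Delta u/u$. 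The Laplacian term is split as its radial second derivative plus mean curvature times $\partial_s u$, and the radial part is integrated by parts against the weight.

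The key algebraic step is to choose the exponent pair $(a,b)$, with $u^a\mathcal J^b$ playing the role of the weighted Jacobian, so that the leftover terms form a quadratic form in $X=\partial\log u$ and $Y=\partial\log(u^a\mathcal J^b)$ that is nonpositive modulo a small multiple of $Y^2$. Both the reparametrization by $\rho_p$ and the target weight $u^{\sigma-p}$ feed into this form.

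The discriminant conditions are what make this work. The first admissibility inequality, involving $(n-1)p+(n-3)\alpha$, should be exactly the condition for completing the square in the Riccati inequality written in the $\rho_p$ variable. The second, \eqref{eq:admissible-sigma}, should be the analogous condition for absorbing the extra factor $u^{\sigma}$ into a power of the Jacobian through a Young/Cauchy--Schwarz step with a free parameter. Once the square is completed, $\psi:=(u^{\sigma-p}\mathcal J)^{\beta}$ satisfies a concavity-type inequality $\psi''\le 0$ (or $\psi''\le \epsilon\psi'^2/\psi$) in $r$. The near-origin asymptotics $\mathcal J\sim s^{n-1}$, together with the bounds on $u$ near $o$, then give $\psi\le C(1+r)$. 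This yields $u^{\sigma-p}\mathcal J\le C(1+r)^{1/\beta}$, and integrating in $r$ and $\theta$ gives \eqref{eq:polynomial-growth} with $K=1+1/\beta$, which should match \eqref{eq:K}.

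Cut points and the zero-extension are harmless, because $\mathcal J$ is set to $0$ beyond $c_\alpha(\theta)$ and the inequality is only needed before it.

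The main obstacle is the algebra of choosing the exponents. We need a single pair $(a,\beta)$ and a Young parameter for which both quadratic forms are simultaneously nonpositive, and we must check that strict admissibility gives a positive margin so that $\beta>0$ and $K$ is finite. We will parametrize everything by the two free exponents and reduce to requiring two $2\times2$ symmetric matrices to be negative semidefinite. If completing the square with a single weight fails, we will split $Y^2$ into two pieces, one for each quadratic form.

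A secondary issue is the initial behaviour near $s=0$, where $\log\mathcal J\to-\infty$. We will handle it by starting the comparison at $r=\epsilon$ and using smoothness of $u$ near $o$, with the constant absorbed into $C(n,o,u,\alpha,p,\sigma)$.
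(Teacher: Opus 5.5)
\textbf{There is a genuine gap at the concavity step, and it sits exactly where the theorem has content.}

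The exact Riccati formula \eqref{eq:exact-riccati} is a differential inequality only for $m=\partial_s\log j$, where $j=u^\alpha J$. The quantity $\xi=\alpha\,\partial_s\log u$ enters it only algebraically, and nothing controls $\xi_s$. The whole of $\Delta_g u$, tangential part included, was already used up through \eqref{eq:conformal-combination} to produce that formula.

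Your target density is $u^{\sigma-p}J=u^{\delta\alpha}j$ (your $\mathcal J$ is the $g$-Jacobian $J$), with $\delta=(\sigma-p-\alpha)/\alpha$. So the second $r$-derivative of $\log\bigl((u^{\sigma-p}J)^\beta\bigr)$ contains the term $\beta\delta\alpha\,\partial_r^2\log u$, which has no sign. The same happens for $\log(u^aJ^b)$ whenever $a\neq\alpha b$. Hence no choice of exponents or Young parameter gives $\psi''\le0$, or even $\psi''\le\epsilon\psi'^2/\psi$, unless $\delta=0$.

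When $\delta=0$ your scheme does work. Write $\tau=\log\rho$ and $X=m/\tau_s=(\log j)_\tau$ as in the paper. Then concavity of $j^q$ in $\rho$ reads $X_\tau-X+qX^2\le0$, which is Lemma~\ref{lem:dimensionless}. It yields $K=1+1/q$, in agreement with \eqref{eq:K}, since $\beta=\Gamma=1$ there. But $\sigma=p+\alpha$ is a special case; the Bernstein application uses $\sigma=2\neq p+\alpha=4/3$. Without pointwise bounds on $u$ you cannot pass from the weight $u^{p+\alpha}$ to $u^\sigma$.

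\textbf{The missing idea is a first-order monotone quantity corrected by the Jacobian derivative.} Use the paper's notation $Y=\xi/\tau_s$, $W=Y+\frac{b}{2C_\alpha}X$, and $\mathcal J=u^{\sigma-p}J$ for your target density.

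\begin{itemize}
\item \emph{Controlled phase.} One has $(\log\mathcal J)_\tau=\beta X+\delta W$ and $\Gamma X_\tau\le\Gamma X-\Gamma qX^2-\Gamma C_\alpha W^2$. The uncontrolled linear term $\delta W$ is then absorbed by completing the square in $(X,W)$. This gives $\frac{d}{d\tau}\bigl(\log\mathcal J+\Gamma X-(K-1)\tau\bigr)\le0$, and the exponent $K$ is exactly the maximum of that quadratic plus one. Because the correction is $+\Gamma X$, this bounds $\mathcal J\le C\rho^{K-1}$ only while $X\ge-1$. It degenerates as $X\to-\infty$ near conjugate points.
\item \emph{Negative tail.} Once $X$ reaches $-1$ it stays below, and $Z=-X$ satisfies $Z_\tau\ge Z+qZ^2+C_\alpha W^2$. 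Instead of a pointwise bound, one bounds the total remaining mass: $\int P\,d\tau\le P(\tau_1)/(q(1-c))$. This follows from $(\log P)_\tau\le c(\log Z)_\tau$ with $c<1$.
\end{itemize}

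That last inequality is precisely where \eqref{eq:admissible-sigma} enters; it is equivalent to $\delta^2<4C_\alpha(\beta+q)$. So the second admissibility condition is a tail-integrability condition, not the Young-absorption condition you guessed.

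Your plan has neither the $\Gamma X$ correction nor this two-phase structure. As written, it proves the theorem only for $\sigma=p+\alpha$.
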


\begin{remarkx}
The exponent can attain the Euclidean value $n$.
More precisely, $K(\alpha,p,\sigma)\ge n$ for every admissible triple, and
equality holds exactly when $\sigma=np+(n-2)\alpha$.
\end{remarkx}

The third result is independent of the polynomial-growth admissibility
conditions and concerns the positive-curvature geometry of $g$-arclength
mixed radial balls.

\begin{theorem}\label{thm:positive-diameter}
If $\kappa>0$ and $0<\alpha<4/(n-1)$, then
the conformal metric $h_\alpha$ is complete and
\begin{equation}\label{eq:diameter-free}
 \operatorname{diam}_g(M)
 \le
 2\pi
 \sqrt{
 \frac{(n-1)-(n-2)\alpha}
 {(n-1)\bigl(4-(n-1)\alpha\bigr)\kappa}
 }.
\end{equation}
The constant in \eqref{eq:diameter-free} is optimal among smooth closed
examples.
\end{theorem}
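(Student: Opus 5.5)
The plan is to run a second-variation argument along $h_\alpha$-geodesics, parametrized by $g$-arclength, using a test function that is a power of $u$ times a sine profile. Completeness of $h_\alpha$ and the diameter bound will both come out of this argument.

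\textbf{Step 1: second variation.} Let $\gamma$ be an $h_\alpha$-minimizing geodesic with $g$-arclength $s\in[0,L]$. Take $E_1,\dots,E_{n-1}$ $g$-parallel and $g$-orthonormal to $\dot\gamma$, and use the variation fields $V_i=\varphi(s)E_i$ with $\varphi(0)=\varphi(L)=0$. The $h_\alpha$-length functional is $\int u^\alpha|\dot\gamma|_g$. Summing its second variation over $i$, and using $\Delta u=\partial_s^2u+H\partial_su+\sum_i\nabla^2u(E_i,E_i)$ together with the geodesic equation for $h_\alpha$, should give
\[
0\le\int_0^L u^\alpha\Bigl[(n-1)\varphi'^2-\varphi^2\bigl(\operatorname{Ric}(\dot\gamma,\dot\gamma)-\alpha\tfrac{\Delta u}{u}\bigr)+\varphi^2 Q\Bigr]ds .
\]
Here $Q$ gathers the terms in $u'/u$ and $u''/u$ along $\gamma$ (with $\alpha$-dependent coefficients) and the normal gradient $|\nabla^\perp u|^2/u^2$; the latter should come with a favorable sign once $\alpha$ is small. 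By \eqref{eq:spectral} the curvature term is at least $(n-1)\kappa\varphi^2$.

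\textbf{Step 2: test function and absorption.} Set $\varphi=u^{-\beta}\psi$ with $\psi=\sin(\pi s/L)$ and choose $\beta$ optimally. After integrating the $u''$ terms by parts, the integrand becomes a quadratic form in $(\psi',\psi\,u'/u)$. This form must be dominated by $c\,\psi'^2$, where
\[
c=\frac{4((n-1)-(n-2)\alpha)}{(n-1)(4-(n-1)\alpha)}\cdot(n-1)
\]
is up to normalization. Requiring the discriminant to be nonpositive is exactly where $\alpha<4/(n-1)$ enters. Because the weight $u^{\alpha-2\beta}$ is not constant, I would pick $\beta=\alpha/2$ so that the weight cancels. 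Then $0\le\int\bigl(c\,\psi'^2-(n-1)\kappa\psi^2\bigr)ds$, which forces
\[
L\le\pi\sqrt{c/((n-1)\kappa)},
\]
and this is half of the bound \eqref{eq:diameter-free}. The factor $2\pi$ versus $\pi$ will be settled by the exact constants. I expect this completing-the-square step, with exact constants, to be the main obstacle.

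\textbf{Step 3: completeness and diameter.} For completeness of $h_\alpha$, I would argue by contradiction. Suppose an $h_\alpha$-geodesic ray from $o$ has infinite $g$-length but finite $h_\alpha$-length, or leaves every compact set. Apply the Step 2 inequality on long initial segments; these are minimizing for $h_\alpha$ up to the cut time. The bound on $L$ then contradicts minimality beyond the stated length. Hence every $h_\alpha$-minimizing segment has bounded $g$-length, and a Hopf--Rinow-type argument gives completeness: Cauchy sequences for $h_\alpha$ stay in a $g$-bounded, hence compact, set.

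For any $x,y\in M$, join them by an $h_\alpha$-minimizing geodesic, which exists by completeness. Its $g$-length is at most the bound of Step 2, and this controls $d_g(x,y)$, giving \eqref{eq:diameter-free}.

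\textbf{Step 4: optimality.} I would take a warped product $S^1\times S^{n-1}$, or a doubly warped sphere with $u=u(r)$, solve the ODE for which every inequality above is an equality, and check that the diameter approaches the constant. Standard round spheres with nonconstant $u$ of Antonelli--Xu type may also serve.
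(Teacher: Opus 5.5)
Your Steps 1--2 are a sound alternative to the paper's derivation. The paper proves the Riccati inequality $m_s+a_\alpha m^2+(n-1)\kappa\le0$ for $m=\partial_s\log(u^\alpha J)$ and integrates it against $\zeta^2$; you work with the index form directly. One repair is needed: a $g$-parallel frame cannot stay normal to $\dot\gamma$, since $\gamma$ is not a $g$-geodesic, so use normal (Fermi) transport. Then $Q=-\alpha u''/u-\alpha|\nabla^\perp u|_g^2/u^2$, and the normal term is favorable for every $\alpha>0$. With $\varphi=u^{-\alpha/2}\psi$, $\xi=\alpha u'/u$, and one integration by parts,
\[
0\le\int_0^L\Bigl[(n-1)\psi'^2-(n-3)\xi\psi\psi'-\Bigl(\frac1\alpha-\frac{n-1}4\Bigr)\xi^2\psi^2-(n-1)\kappa\psi^2\Bigr]ds .
\]
Absorbing the $\xi$-terms gives exactly \eqref{eq:positive-index} with $c=1/a_\alpha$. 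Your bound $\pi\sqrt{c/((n-1)\kappa)}$ then \emph{equals} the right side of \eqref{eq:diameter-free}, because $\sqrt4$ supplies the factor $2$. It is not half of it, which would contradict optimality anyway.

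The first genuine gap is Step~3. Incompleteness gives an inextendible $h_\alpha$-geodesic of finite $h_\alpha$-length, but nothing makes it minimizing. Step~2 only bounds its cut time by $D$, the right side of \eqref{eq:diameter-free}, and that is no contradiction. Likewise, knowing that minimizing segments have $g$-length at most $D$ says nothing about $h_\alpha$-Cauchy sequences. That would require points to be joinable to $o$ by minimizing segments, which is exactly what is in doubt. The missing idea is to manufacture long minimizing geodesics. Minimize $h_\alpha$-length over curves in the compact ball $\overline B^g_R(o)$ that start at $o$ and stop at their first contact with $\partial B^g_R(o)$. A minimizer exists by compactness and is an $h_\alpha$-geodesic before its endpoint. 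Its $g$-length is at least $R$. Each initial subsegment is $h_\alpha$-minimizing in all of $M$, since a shorter competitor either stays in the ball or reaches the sphere sooner. Step~2 then forces $R\le D$, so $M$ is compact and $h_\alpha$ is complete. This is the construction inside the paper's Lemma~\ref{lem:minimizing-escape}.

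The second gap is Step~4, which is essentially absent, and your candidates fail. Applying the diameter bound on the universal cover shows $\pi_1$ is finite, so $S^1\times S^{n-1}$ cannot occur. A round sphere of curvature $\kappa'$ would need $-\Delta u\ge(n-1)(\kappa-\kappa')u/\alpha$, and integrating forces $\kappa'\ge\kappa$. Its diameter is then at most $\pi/\sqrt\kappa$, strictly below the bound for $n>3$ since $(n-1)^2>4(n-2)$. Solving the equality ODE, as you suggest, gives the paper's profile $f=c\sin^{\beta_f}(\lambda s)$, $u=\sin^{-\beta_u}(\lambda s)$. For $n>3$ this has $\beta_f>1$ and $\beta_u>0$: the metric is singular and $u$ blows up at the poles, so it is not a smooth closed example. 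What is missing is the approximation scheme:
\begin{itemize}
\item a strict margin via $\kappa_\tau=(1+\tau)\kappa$ and $c_\tau<1/(\sqrt{\beta_f}\lambda_\tau)$;
\item round caps of radius $\sim r^{\beta_f}\ll r$, whose curvature dominates $\alpha\Delta u/u\lesssim r^{-2}$;
\item a smoothing lemma at the $C^1$ interfaces that preserves the inequality.
\end{itemize}
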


After matching parameters, the constant in \eqref{eq:diameter-free} is the
one obtained by Shen and Ye~\cite[Corollary~1]{ShenYe1997}.
The argument below gives a radial derivation of this estimate and proves its
optimality among smooth closed examples.
Antonelli and Xu~\cite[Corollary~1]{AntonelliXu2024} proved that, in this
parameter range, $\operatorname{diam}_g(M)\le C(n,\alpha)\kappa^{-1/2}$
for some constant $C(n,\alpha)$.
For $n=3$, equality is attained by a round sphere and a constant function.
For $n>3$, the proof below constructs smooth rotationally symmetric spheres
whose diameters converge to the right-hand side of
\eqref{eq:diameter-free}.

\subsection{Applications}

We use the two mixed radial volume mechanisms to derive two geometric
applications.

\begin{corollary}\label{cor:intro-AX}
Let $(M^n,g)$ be closed, $n\ge3$, let $\kappa>0$, and let
$0<\alpha\le(n-1)/(n-2)$.
Suppose that a positive function $u$ satisfies \eqref{eq:spectral}.
Let $(\widetilde M,\widetilde g)$ be the universal cover and let
$\widetilde u$ be the lift of $u$.
Then
\begin{equation}\label{eq:AX-diameter}
 \operatorname{diam}_{\widetilde g}(\widetilde M)
 \le
 \frac{\pi}{\sqrt\kappa}
 \left(\frac{\max_M u}{\min_M u}\right)^\eta,
 \qquad
 \eta=\frac{n-3}{n-1}\alpha,
\end{equation}
so $\pi_1(M)$ is finite, and
\begin{equation}\label{eq:AX-volume}
 \operatorname{Vol}_{\widetilde g}(\widetilde M)
 \le
 \kappa^{-n/2}\operatorname{Vol}(\mathbb S^n).
\end{equation}
If equality holds in \eqref{eq:AX-volume}, then $u$ is constant and
$(\widetilde M,\widetilde g)$ is the round sphere of radius
$\kappa^{-1/2}$.
\end{corollary}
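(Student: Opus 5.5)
We lift everything to the universal cover and apply Theorem~\ref{thm:model-growth} there. The spectral inequality \eqref{eq:spectral} is local, so it holds for $(\widetilde M,\widetilde g,\widetilde u)$. Since $M$ is closed, $\widetilde g$ is complete. Also $\min\widetilde u=\min_M u>0$ and $\max\widetilde u=\max_M u$. Hence $h_\alpha=\widetilde u^{2\alpha}\widetilde g$ is bi-Lipschitz to $\widetilde g$ and complete, with no truncation at maximal existence time.

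\emph{Diameter bound.} For the diameter we work with the radial Riccati comparison underlying Theorem~\ref{thm:model-growth}, rather than with the volume inequality itself. Along each $h_\alpha$-geodesic $\gamma_\theta$ from $o$, parametrized by $g$-arclength, the weighted Jacobian satisfies a Riccati inequality whose model solution is $\overline s_\kappa$ after a reparametrization by a power of $u$. With $\kappa>0$, this forces a conjugate point, and hence $c_\alpha(\theta)<\infty$, no later than the first zero of the model. The reparametrization distorts the parameter by a factor controlled by $(\max u/\min u)^\eta$. The outcome should be
\[
c_\alpha(\theta)\le \frac{\pi}{\sqrt\kappa}\left(\frac{\max u}{\min u}\right)^\eta
\]
for every $\theta$.

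Since $h_\alpha$ is complete, every point of $\widetilde M$ is reached as $\gamma_\theta(s)$ with $s\le c_\alpha(\theta)$. The $g$-distance from $o$ to that point is at most the $g$-length $s$. This gives the $\widetilde g$-radius bound about every base point $o$, and hence (up to the factor $2$ versus $1$, which should come out of the computation) the stated bound \eqref{eq:AX-diameter}.

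\emph{Finiteness of $\pi_1(M)$.} $\widetilde M$ is complete and bounded, hence compact. The deck group acts freely and properly discontinuously on it, so $\pi_1(M)$ is finite.

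\emph{Volume bound.} Choose $o$ to be a global minimum point of $\widetilde u$; this exists because $\widetilde M$ is compact. By \eqref{eq:minimum-growth},
\[
\operatorname{Vol}\bigl(\Omega_R^\alpha(o)\bigr)\le V_\kappa^n(R)
\]
for all $R$. Taking $R$ larger than the diameter, $\Omega_R^\alpha(o)$ exhausts $\widetilde M$ up to the measure-zero $h_\alpha$-cut locus. Since $\overline s_\kappa$ vanishes past $\pi/\sqrt\kappa$, we have $V_\kappa^n(R)=V_\kappa^n(\pi/\sqrt\kappa)=\kappa^{-n/2}\operatorname{Vol}(\mathbb S^n)$. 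This proves \eqref{eq:AX-volume}.

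\emph{Equality case (main obstacle).} Suppose equality holds in \eqref{eq:AX-volume}. Then the weighted Jacobian must equal the model along almost every radial geodesic, so every inequality dropped in the Riccati derivation is an equality. These dropped terms are expected to include a squared gradient term $|\nabla u|^2/u^2$, or a Cauchy--Schwarz defect involving $\partial_s\log u$. Their vanishing would force $\partial_s u=0$ along all radial geodesics, so $u\equiv u(o)$ is constant.

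Once $u$ is constant, \eqref{eq:spectral} reduces to $\operatorname{Ric}\ge(n-1)\kappa$. Equality in the Bishop--Gromov volume bound then gives the round sphere of radius $\kappa^{-1/2}$.

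The delicate part is identifying exactly which nonnegative term was discarded in the proof of \eqref{eq:minimum-growth}. In particular, at the endpoint $\alpha=(n-1)/(n-2)$ the gradient coefficient may degenerate. In that case I would instead extract constancy from the requirement that the reparametrization factor $(u/u(o))^{\cdot}$ equal $1$ almost everywhere, using that $o$ is a minimum. That requirement forces $u\equiv\min u$.
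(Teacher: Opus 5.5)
Your proposal is correct and follows the paper's proof. You lift to $\widetilde M$, bound the $g$-arclength cut time by the root comparison plus Sturm comparison in the $z$-variable (first zero controlled by $\min u$, conversion back via $z\ge s/(\max u)^\eta$), deduce compactness and finiteness of $\pi_1(M)$, apply \eqref{eq:minimum-growth} at a lifted minimum point, and finish with constancy of $u$ plus Bishop rigidity. Since your $g$-radius bound holds about every base point, it already is the diameter bound, so there is no factor-$2$ issue.

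In the equality case, your fallback is what the paper's ``inspection'' amounts to, and it works for every $\alpha\in(0,(n-1)/(n-2)]$, so the Riccati-defect discussion is unnecessary. This holds provided the factor you use is the weight $(u/u(o))^{2\alpha/(n-1)}\ge1$ from \eqref{eq:model-growth}, not the reparametrization $U^{-\eta}$, which is trivial when $n=3$. Equality forces $\int_{\Omega_R^\alpha(o)}\bigl((u/u(o))^{2\alpha/(n-1)}-1\bigr)\,dV_g=0$, hence $u\equiv u(o)$ by continuity and density of $\Omega_R^\alpha(o)$.
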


This recovers the sharp spectral Bonnet--Myers and volume theorem of
Antonelli and Xu~\cite{AntonelliXu2024} by a radial comparison argument.
Compared with their global result, Theorem~\ref{thm:model-growth} gives a
local version of volume comparison on the mixed radial domains
$\Omega_R^\alpha(o)$.

The second application concerns the stable Bernstein problem.
The classical theorem of Schoen, Simon, and Yau gives flatness under an
additional Euclidean extrinsic volume growth assumption
\cite{SchoenSimonYau1975}; Bellettini recently extended the corresponding
immersed result through hypersurface dimension six
\cite{Bellettini2025}.
Without a growth assumption, the case $M^3\to\R^4$ was first settled by
Chodosh and Li~\cite{ChodoshLi2024}; subsequent proofs and extensions use
anisotropic and warped $\mu$-bubble methods, conformal Bakry--\'Emery
geometry, and spectral Green-kernel estimates
\cite{ChodoshLiAnisotropic2023,CatinoMastroliaRoncoroni2024,
CabreCatinoMariMastroliaRoncoroni2026}.
The cases $M^4\to\R^5$ and $M^5\to\R^6$ were later resolved by Chodosh,
Li, Minter, and Stryker~\cite{ChodoshLiMinterStryker2026} and by
Mazet~\cite{Mazet2024}, respectively; Mazet's argument uses the
four-dimensional spectral volume comparison of Antonelli and Xu.
The general problem for $M^6\to\R^7$ remains open without an additional
growth hypothesis.
Our proof below combines the polynomial volume estimate proved here with the
weighted $L^4$ curvature inequality of Cabr\'e, Catino, Mari, Mastrolia, and
Roncoroni~\cite{CabreCatinoMariMastroliaRoncoroni2026}.

\begin{corollary}\label{cor:bernstein}
Let $F:(M^3,g)\longrightarrow\R^4$ be a smooth, connected, complete,
two-sided, stable minimal immersion.
Then $A\equiv0$, and $F(M)$ is an affine three-plane in $\R^4$.
\end{corollary}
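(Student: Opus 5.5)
The plan is to reduce Corollary~\ref{cor:bernstein} to the classical Schoen--Simon--Yau argument. The missing ingredient there, Euclidean-type volume growth, will be supplied by Theorem~\ref{thm:polynomial} with $\kappa=0$.

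\emph{Step 1: the spectral inequality.} Since $F$ is stable, there is a positive $u$ on $M$ with $-\Delta u-|A|^2u\ge0$; when $M$ is noncompact this follows from Fischer-Colbrie--Schoen. Minimality together with the Gauss equation gives $\operatorname{Ric}\ge-|A|^2\cdot$(sharp constant). For $n=3$ one has the pointwise bound $\operatorname{Ric}\ge-\tfrac{2}{3}|A|^2 g$, with equality possible only in the umbilic-free extremal configuration. Combining the two,
\[
\operatorname{Ric}-\alpha\frac{\Delta u}{u}g\ge\Bigl(\alpha-\tfrac23\Bigr)|A|^2g\ge0
\qquad\text{whenever }\alpha\ge\tfrac23 .
\]
Hence \eqref{eq:spectral} holds with $\kappa=0$ for every $\alpha\in[2/3,\,2)$. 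Here $(n-1)/(n-2)=2$, and for $\alpha>2/3$ we even retain the positive term $(\alpha-\tfrac23)|A|^2$.

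\emph{Step 2: choice of parameters.} For $n=3$ we have $\eta=0$, and the admissibility conditions read $(2p)^2<4\alpha(2-\alpha)$ and $(2\sigma-2\alpha)^2<12\alpha(2-\alpha)$. I would choose $\alpha\in(2/3,2)$ and $p,\sigma$ so that three things hold simultaneously:
\begin{itemize}
\item $K(\alpha,p,\sigma)=3$, that is, $\sigma=3p+\alpha$;
\item the weight $u^\sigma$ matches the weight in the weighted $L^4$ curvature inequality of Cabr\'e--Catino--Mari--Mastrolia--Roncoroni;
\item the domains $\Omega_R^{\alpha,p}(o)$ exhaust $M$, so that they carry usable cutoff functions.
\end{itemize}
The natural candidate is $p=\alpha$. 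Then $\Omega_R^{\alpha,\alpha}$ are genuine $h_\alpha$-balls, and $h_\alpha$ should be complete because the analogue of the completeness claim in Theorem~\ref{thm:positive-diameter} holds in this range. The constraint $4\alpha^2<4\alpha(2-\alpha)$ requires $\alpha<1$. With $\sigma=4\alpha$, the second condition becomes $36\alpha^2<12\alpha(2-\alpha)$, which requires $\alpha<1/2$. This conflicts with $\alpha>2/3$, so $p$ must be taken smaller. The main obstacle is therefore the bookkeeping: find $(\alpha,p,\sigma)$ admissible and compatible with the CCMMR weighted inequality, which integrates $|A|^4u^{\sigma}$ against cutoffs $\varphi$ of the radial function $\rho_p$ with $|\nabla\rho_p|\lesssim u^p$. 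If this fails, I would use a $\rho_p$-cutoff with the gradient weight absorbed via Young's inequality into the $u^\sigma$ volume term.

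\emph{Step 3: conclusion.} Take $\varphi=\varphi(\rho_p/R)$ supported in $\Omega_{2R}^{\alpha,p}$. The weighted $L^4$ inequality then gives
\[
\int_{\Omega_R}|A|^4u^{\sigma'}\le CR^{-4}\int_{\Omega_{2R}}u^{\sigma}\le CR^{-4}\bigl(1+R^{K}\bigr).
\]
Since $K=3<4$, the right side tends to $0$ as $R\to\infty$. Hence $A\equiv0$, so $M$ is totally geodesic and $F(M)$ is an affine three-plane, by completeness and connectedness.

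Near the cut locus, $\rho_p$ is only Lipschitz. This is handled as in Calabi's trick, or by noting that distance-like functions are enough for cutoffs.
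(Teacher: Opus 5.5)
Your Step 1 is correct. The proof breaks down at Step 2, which is where the real content lies, and the fallback you propose for it does not work.

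The cause is your insistence on $K=3$. Step 3 never needs the Euclidean exponent; it only needs $K<4$. Demanding $\sigma=3p+\alpha$ with $\alpha\ge 2/3$ forces $p\neq\alpha$, as you computed, and hence genuinely mixed radial domains. For $p\neq\alpha$, $\rho_p$ is not distance-like.
\begin{itemize}
\item Only its radial derivative $\partial_s\rho_p=u^p$ is known. Its angular derivatives involve $u$ along Jacobi fields and are uncontrolled, so there is no bound $|\nabla\rho_p|_g\lesssim u^p$.
\item It is generally discontinuous across the $h_\alpha$-cut locus. Two minimizing $h_\alpha$-geodesics reaching the same point have equal $h_\alpha$-length but in general different values of $\int u^p\,ds$.
\end{itemize}
So $\varphi(\rho_p/R)$ is not an admissible Lipschitz cutoff. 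Calabi's trick, which is designed for distance functions, does not apply. Young's inequality cannot create a gradient bound that is missing. In addition, you never specify which weighted $L^4$ inequality you use or what $\sigma'$ is, so the weight matching you list as a requirement is never checked.

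The missing idea is to stay with genuine metric balls and accept $K>3$. Take $\alpha=p=2/3$ and $\sigma=2$.
\begin{itemize}
\item Then $h=u^{4/3}g$, the mixed radial regions are the $h$-balls, and $u^2\,dV_g=dV_h$.
\item The triple is admissible, since $16/9<32/9$ and $64/9<96/9$. Formula \eqref{eq:K} gives $K=2+\sqrt2<4$.
\item The Cabr\'e--Catino--Mari--Mastrolia--Roncoroni inequality is $\frac13\int|A|^4u^{-2/3}\psi^2\,dV_g\le\int|A|^2u^{-2/3}|\nabla\psi|_g^2\,dV_g$. With $\psi=\zeta^2$ and H\"older it becomes $\int(|A|u^{-2/3})^4\zeta^4\,dV_h\le144\int|\nabla^h\zeta|_h^4\,dV_h$. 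Its weights match $h$ exactly, which is why this triple is the right one.
\item An $h$-distance cutoff then gives $\int_{B_R^h}(|A|u^{-2/3})^4\,dV_h\le C(R^{-4}+R^{\sqrt2-2})\to0$.
\end{itemize}

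Your argument for completeness of $h_\alpha$ is also invalid. In Theorem~\ref{thm:positive-diameter}, completeness comes from the cut-time bound of Lemma~\ref{lem:radial-cutoff}. That bound requires $\kappa>0$ and gives nothing when $\kappa=0$. Completeness of $u^{4/3}g$ must be imported from Catino--Mastrolia--Roncoroni (their Lemma~2.3). Without it, the $h$-balls need not have compact closure, and the cutoff argument fails.
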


The two volume estimates begin with the same one-dimensional quantity: a
weighted Jacobian along each radial $h_\alpha$-geodesic.
The resulting Riccati formula can be used in two ways.
Taking the $(n-1)$-st root of the weighted Jacobian converts the formula into
a linear second-order inequality and yields the space-form comparison in
Theorem~\ref{thm:model-growth}.
For general mixed radial balls, we instead retain the logarithmic derivatives
of both the weighted Jacobian and $u$ as coupled variables.
A quadratic estimate for these variables, followed by a correction involving
the Jacobian derivative, gives the monotonicity formula from which
Theorem~\ref{thm:polynomial} follows.

Section~\ref{sec:riccati} derives the weighted radial formula.
Section~\ref{sec:model} develops the $g$-arclength mixed radial comparison,
including model comparison, positive-curvature cutoff, and optimality of the
diameter constant.
Section~\ref{sec:polynomial} proves polynomial growth for general mixed
radial balls.
Sections~\ref{sec:AX} and~\ref{sec:bernstein} prove the two applications.

\textit{Use of AI.}
Artificial-intelligence tools (ChatGPT 5.5 and 5.6) were used to assist with some of the
calculations in the preparation of this manuscript.  The authors conceived the research direction and developed the weighted-geodesic approach and the framework of mixed radial balls. All AI-assisted calculations were
independently checked by the authors, who take full responsibility for the
arguments and conclusions of the paper.

\section{Weighted radial Jacobians under conformal change}\label{sec:riccati}

Throughout the technical sections, assume $n\ge3$ and \eqref{eq:spectral}.
We begin with the radial calculation used in all three conclusions.
Put $\phi:=\alpha\log u$ and $h:=e^{2\phi}g=u^{2\alpha}g$.
At a regular point of the $h$-distance $r=d_h(o,\cdot)$, let
$E:=\nabla^h r$ and $T:=u^\alpha E$.
Then $|E|_h=|T|_g=1$.
Along a radial ray parametrized by $g$-arclength $s$, we have
$dr/ds=u^\alpha$ and $\partial_s=u^\alpha\partial_r$.

Let $\widetilde J(r,\theta)$ be the ordinary $h$-polar Jacobian and let
$J(s,\theta)$ be the $g$-Jacobian of the corresponding radial family.
Since $dV_h=u^{n\alpha}dV_g$ and $dr=u^\alpha ds$, we have
$\widetilde J=u^{(n-1)\alpha}J$.
The weighted density adapted to \eqref{eq:spectral} is
\begin{equation}\label{eq:j}
 j:=u^{-(n-2)\alpha}\widetilde J=u^\alpha J.
\end{equation}
Let $m:=\partial_s\log j$ and $\xi:=\phi_s$.
If $\mathcal S:=\operatorname{Hess}_h r|_{E^\perp}$ and
$H:=\operatorname{tr}_h\mathcal S$, write $\mathring{\mathcal S}$ for the
trace-free part of $\mathcal S$.
Finally, define the nonnegative radial defect
$\mathcal E_\alpha(T):=\operatorname{Ric}_g(T,T)
-\alpha(\Delta_gu/u)-(n-1)\kappa$.

The weighted radial quantities satisfy the following exact Riccati formula.

\begin{proposition}\label{prop:riccati}
Before the $h$-cut locus,
\begin{align}
 &m_s+\frac1{n-1}m^2+\frac{n-3}{n-1}m\xi
 +\left(\frac1\alpha-\frac{n-2}{n-1}\right)\xi^2+(n-1)\kappa
 \notag\\
 &\qquad
 =-u^{2\alpha}|\mathring{\mathcal S}|_h^2
  -\mathcal E_\alpha(T)
  -\frac1\alpha|\nabla_T^\perp\phi|_g^2.
 \label{eq:exact-riccati}
\end{align}
In particular, if $\kappa\ge0$ and $0<\alpha<(n-1)/(n-2)$, then
\begin{equation}\label{eq:scalar-riccati}
 m_s+\frac1{n-1}m^2+\frac{n-3}{n-1}m\xi
 +\left(\frac1\alpha-\frac{n-2}{n-1}\right)\xi^2
 \le0.
\end{equation}
\end{proposition}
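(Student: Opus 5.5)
Our plan is to start from the classical Riccati equation for the $h$-distance function and then rewrite everything in terms of $g$-quantities by conformal change.

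\textbf{Step 1: the Riccati equation in $h$.} Along an $h$-geodesic before the cut locus, the shape operator $\mathcal S$ of the $h$-distance spheres satisfies
\[
 \partial_r H+|\mathcal S|_h^2+\operatorname{Ric}_h(E,E)=0,
 \qquad
 |\mathcal S|_h^2=|\mathring{\mathcal S}|_h^2+\tfrac{1}{n-1}H^2 .
\]
We also have $H=\partial_r\log\widetilde J$. Since $\partial_s=u^\alpha\partial_r$, this gives $\partial_s\log\widetilde J=u^\alpha H$. By \eqref{eq:j}, $m=\partial_s\log\widetilde J-(n-2)\xi$, and therefore
\[
 u^\alpha H=m+(n-2)\xi .
\]

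\textbf{Step 2: conformal change of Ricci.} We use the standard formula for $h=e^{2\phi}g$:
\[
 \operatorname{Ric}_h=\operatorname{Ric}_g-(n-2)\bigl(\nabla^2\phi-d\phi\otimes d\phi\bigr)-\bigl(\Delta\phi+(n-2)|\nabla\phi|^2\bigr)g .
\]
We evaluate it on $E=u^{-\alpha}T$, so that $\operatorname{Ric}_h(E,E)=u^{-2\alpha}\operatorname{Ric}_h(T,T)$ in terms of $g$-quantities. The curve $\gamma$ is an $h$-geodesic, not a $g$-geodesic, and its $g$-acceleration is $\nabla_TT=-\nabla^\perp\phi$. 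Hence
\[
 \nabla^2\phi(T,T)=\xi_s+\langle\nabla\phi,\nabla^\perp\phi\rangle=\xi_s+|\nabla^\perp\phi|^2 .
\]
Next, $\Delta\phi=\alpha\Delta u/u-\alpha|\nabla u|^2/u^2=\alpha\Delta u/u-\tfrac1\alpha|\nabla\phi|^2$. Using $|\nabla\phi|^2=\xi^2+|\nabla^\perp\phi|^2$, we write
\[
 \operatorname{Ric}_g(T,T)-\alpha\frac{\Delta u}{u}=(n-1)\kappa+\mathcal E_\alpha(T).
\]

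\textbf{Step 3: substitute and collect.} Multiply the Riccati equation by $u^{2\alpha}$ and note that $u^{2\alpha}\partial_rH=u^\alpha\partial_s H$. Then
\[
 u^\alpha\partial_s H=\partial_s(u^\alpha H)-\alpha\tfrac{u_s}{u}\,u^\alpha H=m_s+(n-2)\xi_s-\xi\bigl(m+(n-2)\xi\bigr).
\]
The terms $(n-2)\xi_s$ should cancel against the Hessian term from Step 2. The $\tfrac{1}{n-1}(m+(n-2)\xi)^2$ term supplies $\tfrac{1}{n-1}m^2$, a cross term $\tfrac{2(n-2)}{n-1}m\xi$, and a $\xi^2$ term. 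Combined with $-m\xi$, the cross term gives $\tfrac{n-3}{n-1}m\xi$, as claimed.

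The $\xi^2$ coefficients collect from several sources:
\[
 \tfrac{(n-2)^2}{n-1}-(n-2)\quad(\text{Riccati}),\qquad (n-2)-(n-2)\quad(\text{Ricci}),\qquad \tfrac1\alpha\quad(\text{from }-\Delta\phi).
\]
Together these should give $\tfrac1\alpha-\tfrac{n-2}{n-1}$.

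The perpendicular terms must also collapse correctly. The Hessian term contributes $-(n-2)|\nabla^\perp\phi|^2$. The $-(\Delta\phi+(n-2)|\nabla\phi|^2)$ term contributes $(n-2)|\nabla^\perp\phi|^2$ together with $\tfrac1\alpha|\nabla^\perp\phi|^2$. Tracking signs, the leftover should be exactly $-\tfrac1\alpha|\nabla^\perp_T\phi|^2$ on the right-hand side.

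\textbf{Main obstacle.} The main difficulty is the sign bookkeeping in these last two collections: the $g$-acceleration correction to the Hessian, and the $|\nabla\phi|^2$ terms split into parallel and perpendicular parts. We would double-check the result against the case $u$ constant, where it reduces to the classical Riccati equation, and against a radial example with $\nabla^\perp\phi=0$.

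\textbf{The inequality \eqref{eq:scalar-riccati}.} This is immediate from \eqref{eq:exact-riccati}. When $\kappa\ge0$, the term $(n-1)\kappa$ is nonnegative, and the right-hand side is nonpositive because $\mathcal E_\alpha\ge0$ by \eqref{eq:spectral} and $\alpha>0$. The condition $0<\alpha<(n-1)/(n-2)$ is not needed for this inequality itself; it only records that the $\xi^2$ coefficient is positive.
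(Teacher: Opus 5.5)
Your route (evaluating $\operatorname{Ric}_h(T,T)$ directly from the conformal formula, with $\nabla^2\phi(T,T)$ computed along a curve that is not a $g$-geodesic) is viable. Step~1 and the $m$, $\xi$ bookkeeping in Step~3 are correct. However, the sign you flagged as the main obstacle is wrong, and the perpendicular terms collapse in your write-up only because a second sign error compensates.

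Since $T=e^{\phi}E$ and $\nabla^h_EE=0$, one has $\nabla^h_TT=\xi T$. Combined with $\nabla^h_XY=\nabla_XY+(X\phi)Y+(Y\phi)X-g(X,Y)\nabla\phi$, this gives
\[
 \nabla_TT=\nabla\phi-\xi T=+\nabla^\perp\phi,
\]
not $-\nabla^\perp\phi$. As a check, in the half-plane with $\phi=-\log y$, the geodesic through $(0,1)$ with $T=(1,0)$ is the unit semicircle, whose curvature vector there is $(0,-1)=\nabla^\perp\phi$. Hence $\nabla^2\phi(T,T)=\xi_s-|\nabla^\perp\phi|^2$, and the Hessian term contributes $+(n-2)|\nabla^\perp\phi|^2$ to $\operatorname{Ric}_h(T,T)$. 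The term $-(n-2)|\nabla\phi|^2$ contributes $-(n-2)\xi^2-(n-2)|\nabla^\perp\phi|^2$. You used $-(n-2)$ for the $\xi^2$ part but $+(n-2)$ for the perpendicular part; that is the second error.

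With correct signs, the two $(n-2)|\nabla^\perp\phi|^2$ terms cancel. Only $+\frac1\alpha|\nabla^\perp\phi|^2$ from $-\Delta\phi$ survives, giving $-\frac1\alpha|\nabla^\perp_T\phi|_g^2$ on the right of \eqref{eq:exact-riccati}. If you keep your acceleration and do the rest correctly, $\operatorname{Ric}_h(T,T)$ contains $\bigl(\frac1\alpha-2(n-2)\bigr)|\nabla^\perp\phi|^2$. That leaves a spurious $+2(n-2)|\nabla^\perp\phi|^2$ on the right-hand side, which breaks the identity and, for $\alpha>1/(2(n-2))$, the sign needed for \eqref{eq:scalar-riccati}. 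The checks you propose ($u$ constant, or $\nabla^\perp\phi=0$) cannot catch this, because the perpendicular terms vanish in both.

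Once the sign is fixed, the rest of your computation goes through: the $m\xi$ and $\xi^2$ coefficients, the cancellation of $(n-2)\xi_s$, and the final deduction (you are right that $\alpha<(n-1)/(n-2)$ is not needed there). The paper avoids the acceleration altogether. It differentiates $H-(n-2)\phi_r$ in $r$, uses $\partial_r(\phi_r)=\operatorname{Hess}_h\phi(E,E)$ along the $h$-geodesic, and then applies the tensorial identity $\operatorname{Ric}_h+(n-2)\operatorname{Hess}_h\phi=\operatorname{Ric}_g-(\Delta_g\phi)g-(n-2)d\phi\otimes d\phi$, in which no $(n-2)|\nabla^\perp\phi|^2$ terms ever appear.
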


\begin{proof}
The conformal transformation formulas give
\begin{align*}
 \operatorname{Ric}_h
 &=\operatorname{Ric}_g-(n-2)\operatorname{Hess}_g\phi
 +(n-2)d\phi\otimes d\phi
 \notag\\
 &\quad
 -\left(\Delta_g\phi+(n-2)|\nabla\phi|_g^2\right)g,
 \\
 \operatorname{Hess}_h\phi
 &=\operatorname{Hess}_g\phi-2d\phi\otimes d\phi
 +|\nabla\phi|_g^2g.
\end{align*}
Consequently,
\begin{equation}\label{eq:conformal-combination}
 \operatorname{Ric}_h+(n-2)\operatorname{Hess}_h\phi
 =\operatorname{Ric}_g-(\Delta_g\phi)g
 -(n-2)d\phi\otimes d\phi.
\end{equation}
Since $\phi=\alpha\log u$, we have
$\Delta_g\phi=\alpha(\Delta_gu/u)-|\nabla\phi|_g^2/\alpha$.
Evaluating \eqref{eq:conformal-combination} on
$E=u^{-\alpha}T$, multiplying by $u^{2\alpha}$, and splitting the gradient
of $\phi$ into its $T$ and $T^\perp$ parts gives
\begin{align}
 &u^{2\alpha}
 \left(\operatorname{Ric}_h
 +(n-2)\operatorname{Hess}_h\phi\right)(E,E)
 \notag\\
 &\qquad
 =(n-1)\kappa+\mathcal E_\alpha(T)
 +\left(\frac1\alpha-(n-2)\right)\xi^2
 +\frac1\alpha|\nabla_T^\perp\phi|_g^2.
 \label{eq:radial-conformal-tensor}
\end{align}

Equation \eqref{eq:j} implies
\begin{equation}\label{eq:log-j-r}
 \partial_r\log j=H-(n-2)\phi_r.
\end{equation}
The traced radial Riccati equation is
\begin{equation}\label{eq:ordinary-riccati}
 H_r=-|\mathcal S|_h^2-\operatorname{Ric}_h(E,E).
\end{equation}
Differentiating the right-hand side of \eqref{eq:log-j-r} with respect to
$r$ and using \eqref{eq:ordinary-riccati}, we obtain
\begin{equation*}
 \partial_r\left(H-(n-2)\phi_r\right)
 =-|\mathcal S|_h^2
 -\left(\operatorname{Ric}_h
 +(n-2)\operatorname{Hess}_h\phi\right)(E,E).
\end{equation*}
Moreover,
\begin{equation}\label{eq:m-H}
 m=\partial_s\log j
 =u^\alpha\left(H-(n-2)\phi_r\right),
 \qquad
 (u^\alpha)_s=u^\alpha\xi.
\end{equation}
Therefore
\begin{equation}\label{eq:m-first}
 m_s=\xi m-u^{2\alpha}|\mathcal S|_h^2
 -u^{2\alpha}
 \left(\operatorname{Ric}_h
 +(n-2)\operatorname{Hess}_h\phi\right)(E,E).
\end{equation}

The normal space to the radial direction has dimension $n-1$, so
\begin{equation}\label{eq:shape-decomposition}
 |\mathcal S|_h^2
 =|\mathring{\mathcal S}|_h^2+\frac1{n-1}H^2.
\end{equation}
Equations \eqref{eq:m-H} and $n-2=(n-1)-1$ also give
\begin{equation}\label{eq:H-m-xi}
 u^\alpha H=m+(n-2)\xi.
\end{equation}
Substituting \eqref{eq:radial-conformal-tensor},
\eqref{eq:shape-decomposition}, and \eqref{eq:H-m-xi} into
\eqref{eq:m-first} proves \eqref{eq:exact-riccati} after collecting the
mixed and square terms.
Under the hypotheses of the last assertion, every discarded term has the
required sign, and \eqref{eq:scalar-riccati} follows.
\end{proof}

Taking a root of the density turns the identity into a second-order
comparison inequality.

\begin{lemma}\label{lem:root-comparison}
Assume $0<\alpha\le(n-1)/(n-2)$ and put $F:=j^{1/(n-1)}$.
Along a regular radial ray, with $U(s)=u(\gamma(s))$,
\begin{equation}\label{eq:F-second-order}
 \left(U^\eta F'\right)'+\kappa U^\eta F\le0.
\end{equation}
If $y$ solves
\begin{equation}\label{eq:y}
 \left(U^\eta y'\right)'+\kappa U^\eta y=0,
 \qquad
 y(0)=0,
 \qquad
 y'(0)=1,
\end{equation}
and $\overline y$ is its zero extension after the first positive zero, then
before the cut time
\begin{equation}\label{eq:Jacobian-comparison}
 u^\alpha J=j=F^{n-1}
 \le u^\alpha(o)\overline y^{\,n-1}.
\end{equation}
\end{lemma}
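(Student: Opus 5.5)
The plan is to rewrite the Riccati identity of Proposition~\ref{prop:riccati} as a linear second-order inequality for $F$, and then run a Sturm--Wronskian comparison against $y$.

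\emph{Step 1: derive \eqref{eq:F-second-order}.} We use the exact identity \eqref{eq:exact-riccati} rather than \eqref{eq:scalar-riccati}, so that the sign of $\kappa$ plays no role. Since $F'/F=m/(n-1)$, we have
\[
F''/F=\frac{m_s}{n-1}+\frac{m^2}{(n-1)^2}.
\]
Hence $m_s+\frac1{n-1}m^2=(n-1)F''/F$. Next, $U^\eta=e^{(\eta/\alpha)\phi}$ along the ray, so
\[
(U^\eta)'=U^\eta\,\frac{\eta}{\alpha}\,\xi=U^\eta\,\frac{n-3}{n-1}\,\xi .
\]
Therefore
\[
\frac{(U^\eta F')'}{U^\eta F}
=\frac{F''}{F}+\frac{n-3}{(n-1)^2}\,m\xi .
\]
Dividing \eqref{eq:exact-riccati} by $n-1$ then gives
\[
\frac{(U^\eta F')'+\kappa U^\eta F}{U^\eta F}
=-\frac1{n-1}\Bigl[\Bigl(\tfrac1\alpha-\tfrac{n-2}{n-1}\Bigr)\xi^2
+u^{2\alpha}|\mathring{\mathcal S}|_h^2+\mathcal E_\alpha(T)
+\tfrac1\alpha|\nabla_T^\perp\phi|_g^2\Bigr].
\]
The hypothesis $\alpha\le(n-1)/(n-2)$ makes $\frac1\alpha-\frac{n-2}{n-1}\ge0$. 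The other three terms are nonnegative by \eqref{eq:spectral}. Since $F>0$ before the cut time, \eqref{eq:F-second-order} follows.

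\emph{Step 2: comparison on $(0,s_0)$.} Let $s_0$ be the first positive zero of $y$ (possibly $\infty$). Set $W:=U^\eta(F'y-Fy')$. Then
\[
W'=(U^\eta F')'y-F(U^\eta y')'\le -\kappa U^\eta Fy+\kappa U^\eta Fy=0
\]
wherever $y>0$, so $W$ is nonincreasing. Near $s=0$ we have $J(s)=s^{n-1}(1+o(1))$, so $j=u^\alpha(o)s^{n-1}(1+o(1))$ and $F=u(o)^{\alpha/(n-1)}s+o(s)$. Also $y=s+o(s)$ and $U$ is smooth, so $W(0^+)=0$ and hence $W\le0$. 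Since $(F/y)'=W/(U^\eta y^2)\le0$, the ratio $F/y$ is nonincreasing. Its limit at $0^+$ is $u(o)^{\alpha/(n-1)}$, so $F\le u(o)^{\alpha/(n-1)}y$. Raising to the power $n-1$ gives \eqref{eq:Jacobian-comparison} on $(0,\min\{s_0,c_\alpha(\theta)\})$.

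\emph{Step 3: beyond the first zero.} If $s_0<c_\alpha(\theta)$, then $F$ would be positive and continuous at $s_0$. But Step~2 gives $F(s)\le u(o)^{\alpha/(n-1)}y(s)\to0$ as $s\to s_0^-$, a contradiction. Hence $c_\alpha(\theta)\le s_0$, so the region where $\overline y$ is the zero extension is never reached before the cut time, and the inequality holds there trivially.

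The main obstacle I expect is the endpoint analysis at $s=0$. Justifying $W(0^+)=0$ and $\lim F/y=u(o)^{\alpha/(n-1)}$ requires the small-$s$ expansion of the $g$-Jacobian of the $h$-geodesic family in the $g$-arclength parameter. Since $h=u^{2\alpha}g$ is conformal and the initial direction is $g$-unit, that expansion should follow from a short expansion of the geodesic family.
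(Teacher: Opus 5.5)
Your proposal is correct and follows the paper's proof essentially step for step. You turn the exact identity \eqref{eq:exact-riccati} into \eqref{eq:F-second-order} via $m=(n-1)F'/F$ and $(U^\eta)'=\frac{n-3}{n-1}\xi U^\eta$, then run the same weighted Wronskian comparison, using the pole expansions and the first-zero contradiction to handle the cut time; the pole expansion you flag as the main obstacle is simply asserted in the paper as well, and it follows from the joint smoothness of $(s,\theta)\mapsto\gamma_\theta(s)$ at $s=0$ with $\partial_s\gamma_\theta(0)=\theta$.
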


\begin{proof}
Since $m=(n-1)F'/F$, the sum of the first two terms in
\eqref{eq:exact-riccati} is $(n-1)F''/F$.
Dropping the nonnegative square terms and using
$\eta=(n-3)\alpha/(n-1)$ gives \eqref{eq:F-second-order}.

While $F$ and $y$ are positive, the weighted Wronskian satisfies
\begin{equation*}
 \left[U^\eta(F'y-Fy')\right]'
 =y\left(U^\eta F'\right)'-F\left(U^\eta y'\right)'
 \le0.
\end{equation*}
The pole expansions are
\begin{equation*}
 \begin{aligned}
 J(s,\theta)&=s^{n-1}+O(s^n),\\
 F(s)&=u^{\alpha/(n-1)}(o)s+O(s^2),\\
 y(s)&=s+O(s^2).
 \end{aligned}
\end{equation*}
Thus the Wronskian has limiting value zero and $F/y$ is nonincreasing.
It follows that
\begin{equation}\label{eq:F-y}
 F\le u^{\alpha/(n-1)}(o)y
\end{equation}
until the first zero of $y$.
If this zero occurred before the cut time, letting $s$ approach it in
\eqref{eq:F-y} would contradict the positivity of $F$.
Extending both sides by zero after the relevant first zero gives
\eqref{eq:Jacobian-comparison}.
\end{proof}

\section{Mixed radial comparison in the
\texorpdfstring{$g$}{g}-arclength parameter}\label{sec:model}

\subsection{Sharp model comparison}

We first prove Theorem~\ref{thm:model-growth}.
The argument also explains why centering at a minimum point removes every
extraneous power of $u$.

\begin{proof}[Proof of Theorem~\ref{thm:model-growth}]
Every point of $\Omega_R^\alpha(o)$ is joined to $o$ by a curve of
$g$-length less than $R$.
Hence the radial ball lies in the compact set $\overline B_R^g(o)$, and
$m_R>0$.
Notice also that a radial $h_\alpha$-geodesic cannot cease to exist at finite
$g$-arclength while remaining in this compact set.
Thus no completeness assumption on $h_\alpha$ is needed here.

By \eqref{eq:Jacobian-comparison} and the identity
$2\alpha/(n-1)-\alpha=-\eta$,
\begin{equation}\label{eq:density-model-start}
 U^{2\alpha/(n-1)}J
 =U^{-\eta}j
 \le u^\alpha(o)U^{-\eta}\overline y^{\,n-1}.
\end{equation}
Introduce the increasing variable
\begin{equation}\label{eq:z}
 z(s):=\int_0^sU^{-\eta}(t)\,dt
\end{equation}
and write $Y(z)=y(s(z))$.
Equation \eqref{eq:y} becomes
\begin{equation}\label{eq:Y-equation}
 Y_{zz}+\kappa U^{2\eta}(s(z))Y=0,
 \qquad
 Y(0)=0,
 \qquad
 Y_z(0)=u^\eta(o).
\end{equation}
On the relevant part of the ray, $U\ge m_R$.
Since $\kappa\ge0$, Sturm comparison gives
$0\le\overline Y(z)\le u^\eta(o)\overline
s_{\kappa m_R^{2\eta}}(z)$.
Moreover, $s<R$ implies $z(s)\le R/m_R^\eta$.
Integrating \eqref{eq:density-model-start} first along a ray and then over
$\mathbb S_oM$, and using $dz=U^{-\eta}ds$, gives
\begin{align*}
 \int_{\Omega_R^\alpha(o)}u^{2\alpha/(n-1)}\,dV_g
 &\le
 \omega_{n-1}u^{\alpha+(n-1)\eta}(o)
 \int_0^{R/m_R^\eta}
 \overline s_{\kappa m_R^{2\eta}}^{\,n-1}(z)\,dz.
\end{align*}
The scaling identity
$s_{\kappa m_R^{2\eta}}(z)=m_R^{-\eta}s_\kappa(m_R^\eta z)$ and
$\alpha+(n-1)\eta=(n-2)\alpha$ give
\begin{equation*}
 \int_{\Omega_R^\alpha(o)}u^{2\alpha/(n-1)}\,dV_g
 \le u^{(n-2)\alpha}(o)m_R^{-n\eta}V_\kappa^n(R).
\end{equation*}
The exponent identity $(n-2)\alpha-2\alpha/(n-1)=n\eta$ shows that division
by $u^{2\alpha/(n-1)}(o)$ gives
\eqref{eq:model-growth}.
If $o$ is a global minimum point, then $m_R=u(o)$, and
\eqref{eq:minimum-growth} follows.
\end{proof}

The calculation also records a pointwise raywise refinement:
\begin{equation*}
 \int_{\Omega_R^\alpha(o)}u^{2\alpha/(n-1)}\,dV_g
 \le
 u^\alpha(o)
 \int_{\mathbb S_oM}\int_0^{\min\{R,c_\alpha(\theta)\}}
 U_\theta^{-\eta}(s)\overline y_\theta^{\,n-1}(s)
 \,ds\,d\theta.
\end{equation*}
The space-form estimate is obtained only after replacing the coefficient
$U^{2\eta}$ in \eqref{eq:Y-equation} by its lower bound.

\subsection{Positive-curvature cutoff and diameter estimate}

\begin{lemma}\label{lem:radial-cutoff}
If $\kappa>0$ and $0<\alpha<4/(n-1)$, then
\begin{equation}\label{eq:radial-cut-time}
 c_\alpha(\theta)
 \le
 2\pi
 \sqrt{
 \frac{(n-1)-(n-2)\alpha}
 {(n-1)\bigl(4-(n-1)\alpha\bigr)\kappa}
 }
 \qquad(o\in M,\ \theta\in\mathbb S_oM).
\end{equation}
\end{lemma}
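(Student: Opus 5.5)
The plan is to keep the curvature term $(n-1)\kappa$ in the exact Riccati identity \eqref{eq:exact-riccati} and to remove the coupling with $\xi$ by minimizing pointwise in $\xi$. This yields a scalar Riccati inequality for $m$ alone, with a positive coefficient in front of $m^2$. Such an inequality forces blow-down in finite time.

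\emph{Step 1 (reduction to a scalar inequality).} Fix $o$ and $\theta$, and work on $[0,c_\alpha(\theta))$, where $j>0$ is smooth and Proposition~\ref{prop:riccati} applies. Drop the three nonpositive terms on the right of \eqref{eq:exact-riccati}. Set
\[
a:=\frac1\alpha-\frac{n-2}{n-1}=\frac{(n-1)-(n-2)\alpha}{(n-1)\alpha}.
\]
Note that $\alpha<4/(n-1)$ implies $\alpha<(n-1)/(n-2)$ when $n\ge3$: both bounds equal $2$ for $n=3$, and $4(n-2)<(n-1)^2$ for $n\ge4$. Hence $a>0$. Minimizing the quadratic $\frac{n-3}{n-1}m\xi+a\xi^2$ in $\xi$ gives the lower bound $-\frac{(n-3)^2}{4(n-1)^2a}m^2$. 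Therefore
\[
m_s+c\,m^2+(n-1)\kappa\le0,
\qquad
c=\frac1{n-1}-\frac{(n-3)^2\alpha}{4(n-1)\bigl((n-1)-(n-2)\alpha\bigr)} .
\]
Simplifying the numerator as $4(n-1)-\alpha\bigl(4(n-2)+(n-3)^2\bigr)=(n-1)\bigl(4-(n-1)\alpha\bigr)$ gives
\[
c=\frac{4-(n-1)\alpha}{4\bigl((n-1)-(n-2)\alpha\bigr)}>0 ,
\]
and this is exactly where the hypothesis $\alpha<4/(n-1)$ enters.

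\emph{Step 2 (Riccati comparison from the pole).} By the pole expansion in Lemma~\ref{lem:root-comparison}, $j\sim u^\alpha(o)s^{n-1}$, so $m=(n-1)/s+O(1)\to+\infty$ as $s\to0^+$. Put $\lambda:=\sqrt{c(n-1)\kappa}$. The model $\bar m(s)=\sqrt{(n-1)\kappa/c}\,\cot(\lambda s)$ solves the equality case and has the pole $\bar m\sim 1/(cs)$. Since $c\le 1/(n-1)$, we have $1/(cs)\ge (n-1)/s$, so $m<\bar m$ for small $s$. It is cleaner to compare via $\arctan$: set $w:=\operatorname{arccot}\bigl(m\sqrt{c/((n-1)\kappa)}\bigr)\in(0,\pi)$. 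Then $w'\ge\lambda$ and $w(0^+)=0$, which gives $w(s)\ge\lambda s$. Consequently $m$ cannot stay finite past $s=\pi/\lambda$. Equivalently, $m\le\bar m$ on $(0,\pi/\lambda)$ and $m\to-\infty$ no later than $\pi/\lambda$.

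\emph{Step 3 (conclusion).} Suppose $c_\alpha(\theta)>\pi/\lambda$. Then $j$ would be smooth and positive on $(0,\pi/\lambda]$, so $m$ would be finite at $s=\pi/\lambda$, contradicting Step 2. Hence
\[
c_\alpha(\theta)\le\frac{\pi}{\lambda}
=\pi\sqrt{\frac{4\bigl((n-1)-(n-2)\alpha\bigr)}{(n-1)\bigl(4-(n-1)\alpha\bigr)\kappa}},
\]
which is \eqref{eq:radial-cut-time}. If $h_\alpha$ is incomplete and the ray ends earlier, the truncation in the definition of $c_\alpha$ only helps.

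The main obstacle is the justification in Step 2 near $s=0$: I would handle it using $w(0^+)=0$ from the pole expansion, rather than comparing $m$ with $\bar m$ directly. I would also make sure that the identity \eqref{eq:exact-riccati} holds on all of $[0,c_\alpha(\theta))$, i.e.\ that there are no conjugate points before the cut time, so that $j>0$ there.
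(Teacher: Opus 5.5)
Your proof is correct. Step~1 is the same reduction the paper makes, with the same elimination of $\xi$ and the same coefficient $a_\alpha$ (your $c$), obtained from $4(n-2)+(n-3)^2=(n-1)^2$.

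The difference is in how you turn $m_s+a_\alpha m^2+(n-1)\kappa\le0$ into a length bound. The paper uses a Myers-type index argument:
\begin{itemize}
\item multiply by $\zeta^2$ with $\zeta(0)=\zeta(\ell)=0$ and integrate by parts; the pole term $m\zeta^2$ vanishes because $m=(n-1)/s+O(1)$;
\item absorb $2m\zeta\zeta'$ into $a_\alpha m^2\zeta^2$;
\item test with $\zeta=\sin(\pi s/\ell)$ to get $(n-1)\kappa\le\pi^2/(a_\alpha\ell^2)$ for every $\ell<c_\alpha(\theta)$.
\end{itemize}
You instead use a Pr\"ufer-type angle $w=\operatorname{arccot}\bigl(m\sqrt{a_\alpha/((n-1)\kappa)}\bigr)$, for which $w'\ge\lambda$ and $w(0^+)=0$. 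Since $w<\pi$, this gives $\lambda s<\pi$ on all of $(0,c_\alpha(\theta))$ directly.

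Both arguments use the pole expansion minimally: you need $m\to+\infty$, the paper needs $m\zeta^2\to0$. Both give the same sharp constant.

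Your route is pointwise, so it yields more. You get the mean-curvature comparison $m\le(\lambda/a_\alpha)\cot(\lambda s)$ along the whole ray. Equivalently, $j/\sin^{1/a_\alpha}(\lambda s)$ is nonincreasing. Equality holds exactly for the singular core of Proposition~\ref{prop:diameter-sharpness}, where $\alpha\log u+(n-1)\log f=a_\alpha^{-1}\log\sin(\lambda s)+\mathrm{const}$. The paper's integral route needs no pointwise control of $m$ and matches the second-variation viewpoint of the paper.

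You were right to discard the direct comparison ``$m<\bar m$ for small $s$.'' When $a_\alpha=1/(n-1)$, i.e.\ $n=3$, the leading terms of $m$ and $\bar m$ coincide, and the $O(1)$ terms do not fix the sign. The angle variable only needs $w(0^+)=0$.

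Your remaining worry is standard. Conjugate points cannot precede the cut point, so $j>0$ and the Riccati identity holds on $(0,c_\alpha(\theta))$, exactly as the paper uses it on each $[0,\ell]$.
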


\begin{proof}
Since $0<\alpha<4/(n-1)$, we have
$(n-1)-(n-2)\alpha>0$.
The Cauchy--Schwarz inequality gives
\begin{equation}\label{eq:positive-mixed-term}
 \frac{n-3}{n-1}m\xi
 \ge
 -\frac{\alpha(n-3)^2}
 {4(n-1)\bigl((n-1)-(n-2)\alpha\bigr)}m^2
 -\frac{(n-1)-(n-2)\alpha}{\alpha(n-1)}\xi^2.
\end{equation}
Combining \eqref{eq:positive-mixed-term} with
\eqref{eq:exact-riccati}, we obtain
\begin{equation}\label{eq:positive-riccati}
 m_s+a_\alpha m^2+(n-1)\kappa\le0,
\end{equation}
where
\begin{equation}\label{eq:positive-a-alpha}
 a_\alpha
 :=\frac1{n-1}
 -\frac{\alpha(n-3)^2}
 {4(n-1)\bigl((n-1)-(n-2)\alpha\bigr)}
 =\frac{4-(n-1)\alpha}
 {4\bigl((n-1)-(n-2)\alpha\bigr)}>0.
\end{equation}

Fix $0<\ell<c_\alpha(\theta)$ and let $\zeta$ be smooth on
$[0,\ell]$ with $\zeta(0)=\zeta(\ell)=0$.
Multiply \eqref{eq:positive-riccati} by $\zeta^2$ and integrate over
$[\varepsilon,\ell]$, where $0<\varepsilon<\ell$.
After integration by parts, the boundary term at $s=\ell$ vanishes because
$\zeta(\ell)=0$.
At the other endpoint, the pole expansion $m(s)=(n-1)/s+O(1)$ and
$\zeta(s)=O(s)$ imply $m(\varepsilon)\zeta^2(\varepsilon)=O(\varepsilon)$.
Letting $\varepsilon\downarrow0$ and applying the Cauchy--Schwarz inequality
gives
\begin{align}
 (n-1)\kappa\int_0^\ell\zeta^2\,ds
 &\le
 2\int_0^\ell m\zeta\zeta'\,ds
 -a_\alpha\int_0^\ell m^2\zeta^2\,ds
 \notag\\
 &\le
 \frac1{a_\alpha}\int_0^\ell(\zeta')^2\,ds.
 \label{eq:positive-index}
\end{align}
For $\zeta(s)=\sin(\pi s/\ell)$, we have
$\int_0^\ell(\zeta')^2\,ds=(\pi^2/\ell^2)\int_0^\ell\zeta^2\,ds$.
Substituting this identity into \eqref{eq:positive-index} gives
\begin{equation}\label{eq:ell-bound}
 \ell
 \le
 \frac{\pi}{\sqrt{a_\alpha(n-1)\kappa}}
 =
 2\pi
 \sqrt{
 \frac{(n-1)-(n-2)\alpha}
 {(n-1)\bigl(4-(n-1)\alpha\bigr)\kappa}
 }.
\end{equation}
Since \eqref{eq:ell-bound} holds for every
$0<\ell<c_\alpha(\theta)$, taking the supremum over $\ell$ proves
\eqref{eq:radial-cut-time}.
\end{proof}

The following standard escape argument makes the passage from the local
cutoff to completeness explicit.

\begin{lemma}\label{lem:minimizing-escape}
Let $g$ and $h$ be Riemannian metrics on $M$.
If $g$ is complete and $h$ is incomplete, then there are a point $o\in M$
and an $h$-geodesic $\gamma$, parametrized by $g$-arclength on $[0,\infty)$,
such that every compact subsegment of $\gamma$ is $h$-minimizing from $o$.
\end{lemma}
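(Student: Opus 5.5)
Start from an $h$-geodesic that runs off the end of the manifold. Since $h$ is incomplete, Hopf--Rinow gives a point $p\in M$ and a unit-speed $h$-geodesic $\sigma:[0,b)\to M$, $b<\infty$, which cannot be extended past $b$. Such a $\sigma$ leaves every compact set as $t\uparrow b$: if $\sigma(t_k)\to q$ for some $t_k\uparrow b$, the uniform positive injectivity radius of $h$ near $q$ would let us extend $\sigma$. Its $h$-length is $b$, but its $g$-length is infinite. Indeed, finite $g$-length would force $\sigma(t)$ to stay in a $g$-bounded set, which is compact by completeness of $g$, and that contradicts escape.

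This geodesic need not be minimizing, so the next step is to produce minimizing segments. Pick the base point $o:=p$ and points $q_k:=\sigma(t_k)$ with $t_k\uparrow b$. We have $d_h(o,q_k)\le b$, and the $q_k$ leave every $g$-compact set, hence $d_g(o,q_k)\to\infty$.

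\emph{Existence of minimizers.} This is the main obstacle, because $h$ is incomplete and $h$-minimizers between $o$ and $q_k$ need not exist. I would first try to avoid needing them. Consider the $h$-exponential map at $o$. For each $g$-unit direction $\theta$, let $T(\theta)\in(0,\infty]$ be the supremum of $g$-arclengths $s$ such that the $h$-geodesic $\gamma_\theta$ is defined and $h$-minimizing on $[0,s]$. If some $T(\theta)=\infty$, we are done. Otherwise, I claim the points $\gamma_\theta(s)$ with $s<T(\theta)$ cover the set of points joined to $o$ by an $h$-minimizer. For any point $q$ with $d_h(o,q)$ small relative to the distance from $o$ to the $h$-boundary, a minimizer does exist. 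Choose the base point near the "end" instead. Take $o=\sigma(t_0)$ and $q_k=\sigma(t_k)$ with $t_0$ close to $b$. Then $d_h(o,q_k)<b-t_0$, which is arbitrarily small. Meanwhile, the closed $h$-ball of radius $2(b-t_0)$ about $o$ might itself be noncompact (it contains the escaping tail), so compactness must be circumvented.

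The fallback is a limiting argument on directions. Let $\gamma_k$ be $h$-geodesics from $o$ that are $h$-minimizing \emph{within the maximal domain of the exponential map}. Concretely, pass to the $h$-star-shaped domain $\mathcal D\subset T_oM$ where $\exp^h_o$ is defined. Take $v_k\in\mathcal D$ minimizing $|v|_h$ among preimages of $q_k$; existence here follows from properness of $\exp^h_o$ on closed $h$-balls of radius less than the distance to the boundary. Normalize $\theta_k=v_k/|v_k|_g$ and pass to a subsegment, so that $\theta_k\to\theta$. Each segment $\gamma_{\theta_k}|_{[0,s_k]}$ is $h$-minimizing, with $g$-length $s_k\ge d_g(o,q_k)\to\infty$. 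By smooth dependence on initial data, $\gamma_\theta$ is defined on every $[0,S]$. Minimality passes to the limit: $L_h(\gamma_\theta|_{[0,S]})=\lim L_h(\gamma_{\theta_k}|_{[0,S]})=\lim d_h(o,\gamma_{\theta_k}(S))=d_h(o,\gamma_\theta(S))$, using continuity of $d_h$. The resulting $\gamma=\gamma_\theta$ is parametrized by $g$-arclength on $[0,\infty)$, and every compact subsegment is $h$-minimizing from $o$, which is the conclusion.

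I expect the delicate step to be justifying that the minimizing preimages $v_k$ exist. If that fails, I would instead use length-minimizing curves in the metric completion of $(M,h)$, or an Arzelà–Ascoli argument on near-minimizing curves parametrized by $g$-arclength.
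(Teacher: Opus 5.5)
Your first paragraph (the escaping geodesic $\sigma$ has finite $h$-length and infinite $g$-length) and your final limiting step are correct. In that step you pass to a limit of initial directions and then carry minimality to the limit by continuity of $d_h$; this is exactly how the paper concludes.

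The gap is the input that limiting step needs: $h$-minimizing geodesic segments from $o$ whose endpoints leave every compact set. Your assertion that $\gamma_{\theta_k}|_{[0,s_k]}$ is $h$-minimizing is not justified, for two reasons.

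\begin{itemize}
\item The properness of $\exp^h_o$ on $\{|v|_h\le r\}$ holds only for $r$ below the $h$-distance $\delta_o$ from $o$ to the ideal boundary. Your points satisfy $d_h(o,q_k)\to d_h(o,x_\infty)\ge\delta_o$, where $x_\infty$ is the endpoint of $\sigma$ in the metric completion, and the inequality is strict in general. So the cited properness does not reach $q_k$ and does not produce a minimal-norm preimage.
\item More seriously, even when a shortest $h$-geodesic from $o$ to $q_k$ exists, in an incomplete manifold it need not realize $d_h(o,q_k)$. On a flat torus with one point removed, put the puncture on the short straight segment between nearby points $o$ and $q$. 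Then $d_h(o,q)$ is the length of that segment, but every geodesic from $o$ to $q$ in the punctured torus is strictly longer.
\end{itemize}

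Minimizing $|v|_h$ over preimages therefore gives a shortest geodesic, not a minimizing one. The minimality you pass to the limit is never established.

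The missing idea is to let the completeness of $g$, not of $h$, supply compactness, by minimizing toward $g$-spheres rather than toward points. The paper takes $o=\beta(0)$ on an escaping geodesic $\beta$. For $R_j\to\infty$, it minimizes $h$-length among curves in the compact ball $\overline B^g_{R_j}(o)$ that start at $o$ and end at their first contact with $\partial B^g_{R_j}(o)$. The direct method applies because $g$ and $h$ are uniformly equivalent on that ball, and $\beta$ bounds the minimal lengths by $L_h(\beta)$.

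The first-contact constraint makes every initial subsegment of the minimizer $\gamma_j$ minimizing in all of $M$. Take a shorter competitor from $o$ to $\gamma_j(t)$. If it reaches $\partial B^g_{R_j}(o)$, its piece up to first contact already beats $\gamma_j$. If it does not, then following it by the rest of $\gamma_j$ beats $\gamma_j$. Your limit of initial directions then finishes the proof.

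Your Arzel\`a--Ascoli fallback could also be made rigorous without exact minimizers. Take curves $c_k$ from $o$ to $q_k$ with $L_h(c_k)\le d_h(o,q_k)+\varepsilon_k$, parametrized by $g$-arclength. Then:
\begin{itemize}
\item every subarc satisfies $L_h(c_k|_{[a,a']})\le d_h(c_k(a),c_k(a'))+\varepsilon_k$;
\item the restrictions to $[0,S]$ are $1$-Lipschitz into the compact ball $\overline B^g_S(o)$;
\item lower semicontinuity of length makes the limit a shortest path between any two of its points;
\item comparability of $g$ and $h$ on compact sets keeps the limit nondegenerate and of infinite $g$-length.
\end{itemize}
As written, though, this is only a one-line suggestion, so the proposal is incomplete at exactly the step you flagged.
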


\begin{proof}
By the Hopf--Rinow theorem, the incompleteness of $h$ produces an
inextendible $h$-geodesic $\beta$ of finite $h$-length.
The local existence theorem for the geodesic equation shows that $\beta$
leaves every compact subset of $M$.
Set $o=\beta(0)$.
Since $g$ is complete, the closed balls $\overline B_R^g(o)$ are compact and
$\beta$ meets $\partial B_R^g(o)$ for all sufficiently large $R$.
The initial segment of $\beta$ up to its first contact with this boundary
has $h$-length at most $L_h(\beta)$.

For a sequence $R_j\to\infty$, minimize the $h$-length among curves in
$\overline B_{R_j}^g(o)$ that start at $o$ and end when they first reach
$\partial B_{R_j}^g(o)$.
Compactness of $\overline B_{R_j}^g(o)$ and the local equivalence of $g$ and
$h$ give a minimizing $h$-geodesic $\gamma_j$.
The first-contact property shows that every initial subsegment of $\gamma_j$
is $h$-minimizing in $M$.
Parametrize $\gamma_j$ by $h$-arclength.
Their lengths are bounded by $L_h(\beta)$, and their initial velocities lie
in the compact $h$-unit sphere at $o$.
After passing to a subsequence, their lengths converge to some
$L_\infty<\infty$ and their initial velocities converge to a vector $v$.
The corresponding $h$-geodesics converge on every compact interval on which
the geodesic $\gamma$ with initial velocity $v$ exists.

Every compact subsegment of $\gamma$ is $h$-minimizing from $o$, since it is
the limit of the corresponding subsegments of $\gamma_j$.
The maximal $h$-time of $\gamma$ is at most $L_\infty$: otherwise smooth
dependence on initial data would keep the endpoints of $\gamma_j$ in a fixed
compact set, whereas $\gamma_j$ reaches $\partial B_{R_j}^g(o)$ and
$R_j\to\infty$.
Thus $\gamma$ leaves every compact subset of $M$ in finite $h$-time.
Its $g$-length is infinite, and reparametrizing by $g$-arclength proves the
lemma.
\end{proof}

\begin{proof}[Proof of Theorem~\ref{thm:positive-diameter}]
Lemma~\ref{lem:radial-cutoff} proves \eqref{eq:radial-cut-time}.
Suppose for contradiction that $h_\alpha$ is incomplete.
Lemma~\ref{lem:minimizing-escape} gives an $h_\alpha$-geodesic parametrized
by $g$-arclength on $[0,\infty)$ whose every compact subsegment is
$h_\alpha$-minimizing.
Its $g$-arclength cut time is therefore infinite, contradicting
\eqref{eq:radial-cut-time}.
Thus $h_\alpha$ is complete.

For any $x,y\in M$, Hopf--Rinow provides a minimizing
$h_\alpha$-geodesic from $x$ to $y$.
If its $g$-length is $\ell$, then \eqref{eq:radial-cut-time}, applied with
base point $x$, gives
\begin{equation*}
 d_g(x,y)\le\ell
 \le
 2\pi
 \sqrt{
 \frac{(n-1)-(n-2)\alpha}
 {(n-1)\bigl(4-(n-1)\alpha\bigr)\kappa}
 }.
\end{equation*}
This proves \eqref{eq:diameter-free}.
The completeness of $g$ and Hopf--Rinow then imply that $M$ is compact.
The optimality assertion follows from
Proposition~\ref{prop:diameter-sharpness} below.
\end{proof}

\subsection{Optimality of the diameter constant}

The construction uses the following controlled smoothing lemma at the two
rotationally symmetric interfaces.

\begin{lemma}\label{lem:quantitative-smoothing}
Let $\delta>0$, and let $f,u:(-\delta,\delta)\to(0,\infty)$ be $C^1$
functions that are smooth on $(-\delta,0]$ and $[0,\delta)$.
Suppose that there is a constant $\mu>0$ such that, on
$(-\delta,0)\cup(0,\delta)$, the warped product data
\[
 g=ds^2+f^2(s)g_{\mathbb S^{n-1}},
 \qquad u=u(s),
\]
satisfy
\[
 \operatorname{Ric}_g
 -\alpha\frac{\Delta_gu}{u}g
 \ge\bigl((n-1)\kappa+\mu\bigr)g.
\]
Then, for every $0<\varepsilon<\delta$, there are smooth positive functions
$\widetilde f,\widetilde u:(-\delta,\delta)\to(0,\infty)$ that agree with
$f,u$ outside $(-\varepsilon,\varepsilon)$ and, for
$\widetilde g=ds^2+\widetilde f^2(s)g_{\mathbb S^{n-1}}$, satisfy
\[
 \operatorname{Ric}_{\widetilde g}
 -\alpha\frac{\Delta_{\widetilde g}\widetilde u}{\widetilde u}\widetilde g
 \ge(n-1)\kappa\widetilde g
\]
throughout $(-\delta,\delta)$.
\end{lemma}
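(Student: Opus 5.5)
The plan is to exploit the fact that, for a warped product, the spectral curvature quantity is \emph{affine} in the pair of second derivatives $(f'',u'')$ once the first-order data $(s,f,f',u,u')$ are fixed. Smoothing both functions with the \emph{same} kernel then only averages admissible second-order data, while the first-order data move by an amount we can make arbitrarily small.

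\emph{Step 1: the affine structure.} For $g=ds^2+f^2g_{\mathbb S^{n-1}}$ and $u=u(s)$, the two eigenvalues of $\operatorname{Ric}_g-\alpha(\Delta_gu/u)g$ are
\begin{align*}
 \Lambda_1&=-(n-1)\frac{f''}{f}-\alpha\frac{u''}{u}-\alpha(n-1)\frac{f'u'}{fu},\\
 \Lambda_2&=-\frac{f''}{f}+(n-2)\frac{1-(f')^2}{f^2}-\alpha\frac{u''}{u}-\alpha(n-1)\frac{f'u'}{fu}.
\end{align*}
For fixed $(f,f',u,u')$ with $f,u>0$, the set of pairs $(f'',u'')$ satisfying $\Lambda_1,\Lambda_2\ge c$ is therefore an intersection of two half-planes, hence convex. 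The hypothesis says that, with $c=(n-1)\kappa+\mu$, this holds for the one-sided limits $(f''(0^\pm),u''(0^\pm))$ at the common $C^1$ data $(f,f',u,u')(0)$, and on $(-\delta,0)\cup(0,\delta)$ pointwise.

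\emph{Step 2: same-kernel mollification.} Fix a symmetric mollifier $\psi_\tau$ supported in $[-\tau,\tau]$, and set $f_\tau=\psi_\tau*f$ and $u_\tau=\psi_\tau*u$ on $(-\delta+\tau,\delta-\tau)$. Because $f,u$ are $C^1$ and piecewise $C^2$ with bounded one-sided second derivatives near $0$, $f_\tau\to f$ and $u_\tau\to u$ in $C^1$ uniformly near $[-\varepsilon,\varepsilon]$. Moreover,
\[
 (f_\tau'',u_\tau'')(s)=\int\psi_\tau(t)\,(f'',u'')(s-t)\,dt,
\]
with the \emph{same} weights in both components. Each sampled pair $(f'',u'')(s-t)$ is admissible with margin $\mu$ at the data $(f,f',u,u')(s-t)$, and these data differ from $(f_\tau,f_\tau',u_\tau,u_\tau')(s)$ by $o(1)$ as $\tau\to0$. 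By Step 1, applied with the coefficients frozen at the point $s$, the average is admissible with margin $\mu-o(1)$. The coefficients are continuous in the first-order data, and $f''$ and $u''$ are bounded. Hence for small $\tau$ we get $\Lambda_i\ge(n-1)\kappa+\mu/2$ for $(f_\tau,u_\tau)$ on $[-\varepsilon,\varepsilon]$.

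\emph{Step 3: gluing back.} Choose a cutoff $\chi$ that equals $1$ on $[-\varepsilon/2,\varepsilon/2]$, vanishes outside $(-\varepsilon,\varepsilon)$, and is fixed independently of $\tau$. Set
\[
 \widetilde f=f+\chi(f_\tau-f),\qquad \widetilde u=u+\chi(u_\tau-u).
\]
On $\{\chi=1\}$ Step 2 applies directly. On the transition region $\varepsilon/2\le|s|\le\varepsilon$, the functions $f,u$ are smooth, so $f_\tau\to f$ and $u_\tau\to u$ in $C^2$ there. Consequently $\widetilde f\to f$ and $\widetilde u\to u$ in $C^2$ on that region, and the strict margin $\mu$ absorbs the error for small $\tau$. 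Positivity of $\widetilde f$ and $\widetilde u$ follows from $C^0$ convergence, and both functions agree with $f,u$ outside $(-\varepsilon,\varepsilon)$.

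The main obstacle is Step 2: one must make sure the jump of $(f'',u'')$ across $0$ cannot produce a bad value of the curvature expression. Mollifying $f$ and $u$ independently, or using the conformal/Riccati form of the condition, would lose the convexity. Using the same symmetric kernel for both functions reduces the issue to a convex combination of two admissible pairs, and the positive margin $\mu$ pays for the $o(1)$ drift in the first-order coefficients.
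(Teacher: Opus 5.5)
Your proposal is correct and follows essentially the same route as the paper. Both arguments use the affine dependence of the two eigenvalues on $(f'',u'')$, which makes the admissible second-derivative pairs convex. Both convolve the pair $(f,u)$ with a common kernel so that second derivatives become averages of one-sided values while the first-order data converge uniformly. Both then paste back with a cutoff, using the margin $\mu$ to absorb the error.

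The only difference is how the scales are handled. You fix the cutoff at the prescribed scale $\varepsilon$ and send the mollification scale $\tau\to0$, so the transition region lies in the smooth zone where $C^2$ convergence is immediate. The paper instead couples the scales ($\varepsilon^2$ versus $\varepsilon$), uses $O(\varepsilon^4)$ and $O(\varepsilon^2)$ jet estimates, and then shrinks $\varepsilon$. Your decoupling is slightly cleaner and treats every given $\varepsilon$ directly.
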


\begin{proof}
The standard Ricci and radial Laplacian formulas for rotationally symmetric
warped products (see Petersen~\cite[Section~3.2.3, p.~71]{Petersen1998}) show
that the radial and unit tangential eigenvalues of the tensor on the left are
\begin{align}
 \mathcal R_s
 &=-(n-1)\frac{f''}{f}
 -\alpha\left(\frac{u''}{u}+(n-1)\frac{f'u'}{fu}\right),
 \label{eq:warped-radial}\\
 \mathcal R_\theta
 &=-\frac{f''}{f}
 +(n-2)\frac{1-(f')^2}{f^2}
 -\alpha\left(\frac{u''}{u}+(n-1)\frac{f'u'}{fu}\right).
 \label{eq:warped-tangential}
\end{align}
For fixed values of $f,f',u,u'$, both expressions are affine in
$f''$ and $u''$.
Hence the set of second derivative pairs for which both strict inequalities
hold is convex.

The hypothesis provides a uniform positive margin on both sides of the
interface.
Convolve the $C^1$ pair $(f,u)$ at scale $\varepsilon^2$.
Its second derivative is a convex average of the nearby one-sided second
derivatives, while the pair and its first derivative converge uniformly to
the original lower-order jets.
Thus the convolved pair satisfies the two strict inequalities on
$(-\varepsilon,\varepsilon)$ for small $\varepsilon$.

To keep the data unchanged outside the prescribed neighborhood, paste the
convolved pair back to the original pair with a cutoff varying on the larger
scale $\varepsilon$.
Because the original pair is $C^1$ and piecewise smooth, convolution at
scale $\varepsilon^2$ changes its zeroth and first jets by
$O(\varepsilon^4)$ and $O(\varepsilon^2)$, respectively.
The cutoff therefore contributes only $o(1)$ to the second derivatives.
The fixed strict margin absorbs this error.
The smoothed $u$ remains positive for sufficiently small $\varepsilon$, and
shrinking $\varepsilon$ places the smoothing inside any prescribed
neighborhood.
\end{proof}

\begin{proposition}\label{prop:diameter-sharpness}
Let $n\ge3$, $\kappa>0$, and $0<\alpha<4/(n-1)$.
There are smooth rotationally symmetric metrics $g_j$ on $\mathbb S^n$ and
smooth positive functions $u_j$ such that
\begin{equation}\label{eq:sharpness-bound}
 \operatorname{Ric}_{g_j}
 -\alpha\frac{\Delta_{g_j}u_j}{u_j}g_j
 \ge(n-1)\kappa g_j
\end{equation}
and
\begin{equation}\label{eq:sharpness-limit}
 \lim_{j\to\infty}\operatorname{diam}_{g_j}(\mathbb S^n)
 =
 2\pi
 \sqrt{
 \frac{(n-1)-(n-2)\alpha}
 {(n-1)\bigl(4-(n-1)\alpha\bigr)\kappa}
 }.
\end{equation}
\end{proposition}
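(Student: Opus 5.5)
The plan is to build near-extremal examples by forcing equality in every inequality used in the proof of Lemma~\ref{lem:radial-cutoff}, and then to close the resulting singular profile off smoothly with Lemma~\ref{lem:quantitative-smoothing}. Put $a_\alpha$ as in \eqref{eq:positive-a-alpha} and $L:=\pi/\sqrt{a_\alpha(n-1)\kappa}$, which is the right-hand side of \eqref{eq:sharpness-limit}.

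\textbf{Step 1: the extremal profile.} Equality in the radial argument requires four things.
\begin{enumerate}
\item $\mathring{\mathcal S}=0$ and $\nabla^\perp_T\phi=0$. Both hold automatically for rotationally symmetric data $g=ds^2+f^2g_{\mathbb S^{n-1}}$ and $u=u(s)$, where $h_\alpha$-radial geodesics from the pole are the $s$-lines.
\item $\mathcal E_\alpha(T)=0$, that is, equality in the radial eigenvalue $\mathcal R_s$ of \eqref{eq:warped-radial}.
\item Equality in \eqref{eq:positive-mixed-term}, i.e.
\[
 \xi=-\frac{\alpha(n-3)}{2\bigl((n-1)-(n-2)\alpha\bigr)}\,m .
\]
\item Equality in \eqref{eq:positive-riccati}, which gives
\[
 m(s)=\sqrt{(n-1)\kappa/a_\alpha}\,\cot\bigl(\sqrt{a_\alpha(n-1)\kappa}\,s\bigr).
\]
\end{enumerate}
Integrating, $j=c\,\sin^{1/a_\alpha}\bigl(\sqrt{a_\alpha(n-1)\kappa}\,s\bigr)$, and $u^\alpha$ is an explicit power of the same sine. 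Then $f^{n-1}=J=u^{-\alpha}j$ is again an explicit power of $\sin\bigl(\sqrt{a_\alpha(n-1)\kappa}\,s\bigr)$. One checks that $f(s)=s+O(s^3)$ and $u'(0)=0$, so the profile closes smoothly at the pole $s=0$ (after normalizing $c$ and checking parity). The profile degenerates only at $s=L$, where $f\to0$ and $u\to\infty$ (since $\xi\to+\infty$ there).

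\textbf{Step 2: the tangential eigenvalue.} By construction $\mathcal R_s\equiv(n-1)\kappa$. I must verify $\mathcal R_\theta\ge(n-1)\kappa$ in \eqref{eq:warped-tangential} for the explicit power-of-sine profile. Using $\mathcal R_\theta-\mathcal R_s=(n-2)\bigl(f''/f+(1-f'^2)/f^2\bigr)$, this reduces to the one-variable inequality $ff''+1-(f')^2\ge0$ for $f=C\sin^\beta(\lambda s)$. I expect this to hold precisely in the range $0<\alpha<4/(n-1)$, perhaps after a harmless adjustment of the constant $C$. To obtain the strict margin $\mu>0$ that Lemma~\ref{lem:quantitative-smoothing} needs, I will run the construction with $\kappa$ replaced by $\kappa_j=\kappa(1+1/j)$. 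This produces a profile of length $L_j\to L$ with margin $(n-1)(\kappa_j-\kappa)$.

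\textbf{Step 3: capping off near $s=L_j$.} On $[0,L_j-\delta_j]$ I keep the extremal profile. Near the singular end I replace it by a small, highly curved round cap on which $u$ is (nearly) constant. Such a cap has huge Ricci curvature, so the spectral bound holds there with a large margin. To join the two pieces $C^1$ I insert a short transition region where $f$ decreases with $f''\le0$ and $u$ is flattened. The point is that concavity of $f$ and the smallness of $f$ keep both $\mathcal R_s$ and $\mathcal R_\theta$ large, even when $u''/u$ is moderately negative, provided $\delta_j\to0$ suitably. The interfaces are then smoothed by Lemma~\ref{lem:quantitative-smoothing}.

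\textbf{Step 4: conclusion.} The resulting smooth spheres $(g_j,u_j)$ satisfy \eqref{eq:sharpness-bound}. Their diameter is at least the $g_j$-distance between the two poles, which is $\ge L_j-\delta_j\to L$. Combined with the upper bound of Theorem~\ref{thm:positive-diameter}, this gives \eqref{eq:sharpness-limit}.

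\textbf{Main obstacle.} The main obstacle is Step 3: producing a transition in which $u$ stops blowing up without destroying $\mathcal R_s$, since $-\alpha u''/u$ becomes negative there. I would first try making $\log u$ concave-to-flat over an interval where $f$ is already so small that the $-(n-1)f''/f$ and $(n-2)/f^2$ terms dominate. If that fails, I would instead truncate the profile earlier, at $s=L_j-\delta_j$ with $\delta_j\to0$ slowly, and cap there using the Step 2 margin.
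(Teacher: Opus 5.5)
Your strategy matches the paper's: take the equality profile of the radial argument with $\kappa$ replaced by $(1+\tau)\kappa$, check the tangential eigenvalue, cap off, smooth with Lemma~\ref{lem:quantitative-smoothing}, and squeeze with Theorem~\ref{thm:positive-diameter}. However, Step~1 contains a false claim that breaks the construction for every $n>3$: the extremal profile does \emph{not} close smoothly at $s=0$.

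Your own formula gives $m=\frac{\lambda}{a_\alpha}\cot(\lambda s)$ with $\lambda=\sqrt{a_\alpha(n-1)\kappa}$. So $m\sim 1/(a_\alpha s)$ as $s\downarrow0$, whereas a smooth pole forces $m=(n-1)/s+O(1)$. These differ, since
\[
 \frac1{a_\alpha}-(n-1)=\frac{(n-3)^2\alpha}{4-(n-1)\alpha}>0
 \qquad(n>3).
\]
Likewise $\xi=-\frac{\alpha(n-3)}{2((n-1)-(n-2)\alpha)}\,m\to-\infty$ as $s\downarrow0$. So $u$ blows up at $s=0$ for the same reason you found it blows up at $s=L$. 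Explicitly, the profile is $f=c\sin^{\beta_f}(\lambda s)$ and $u=\sin^{-\beta_u}(\lambda s)$, with $\beta_f=2(2-\alpha)/(4-(n-1)\alpha)>1$ and $\beta_u=2(n-3)/(4-(n-1)\alpha)>0$. It is symmetric under $s\mapsto\pi/\lambda-s$, so both ends have $f'=0$ and $u=\infty$, and no choice of $c$ changes this. In fact Step~2 needs $c<1/(\sqrt{\beta_f}\,\lambda)$, which gives $1-(f')^2+ff''=1-c^2\beta_f\lambda^2\sin^{2\beta_f-2}(\lambda s)>0$. Consequently, keeping the profile on $[0,L_j-\delta_j]$ leaves a singular point at $s=0$. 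You must cap both ends, at $s=r$ and $s=\pi/\lambda_\tau-r$, and the pole distance becomes $\pi/\lambda_\tau-2r+2\ell_r$, where $\ell_r=o(r)$ is the cap length. Your Step~1 claim is correct only for $n=3$, where the profile is the round sphere with constant $u$.

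Step~3 is also left non-quantitative, and the obstacle you name points the wrong way. The paper uses no separate transition region. It attaches a geodesic cap of the round sphere of radius $R_r=f(r)/\sqrt{1-f'(r)^2}\sim c\lambda^{\beta_f}r^{\beta_f}$, matched to the body in $C^1$. On the cap it sets $(\log u)'=q_r$, where $q_r\le0$ is nonincreasing, linear near the pole, equal to the body's $(\log u)'(r)$ at the interface, and satisfies $|q_r|\le C/r$. Flattening $u$ toward the pole makes $\log u$ concave, and this helps rather than hurts:
\[
 \frac{\Delta u}{u}=q_r'+q_r^2+(n-1)\frac{f'}{f}q_r\le q_r^2\le\frac{C}{r^2}.
\]
This is dominated by the cap curvature $(n-1)R_r^{-2}\sim r^{-2\beta_f}$ exactly because $\beta_f>1$. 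That exponent comparison is the quantitative ingredient your Step~3 is missing. With symmetric capping and this estimate, your outline becomes the paper's proof.
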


\begin{proof}
Recall $a_\alpha$ from \eqref{eq:positive-a-alpha}, and set
$\beta_f:=2(2-\alpha)/(4-(n-1)\alpha)$ and
$\beta_u:=2(n-3)/(4-(n-1)\alpha)$.
For $\tau>0$, let $\kappa_\tau:=(1+\tau)\kappa$ and
$\lambda_\tau:=\sqrt{a_\alpha(n-1)\kappa_\tau}$,
and choose $0<c_\tau<1/(\sqrt{\beta_f}\,\lambda_\tau)$.
On $0<s<\pi/\lambda_\tau$, consider the singular rotationally symmetric
data
\begin{equation}\label{eq:singular-spindle}
 f_\tau(s):=c_\tau\sin^{\beta_f}(\lambda_\tau s),
 \qquad
 u_\tau(s):=\sin^{-\beta_u}(\lambda_\tau s).
\end{equation}
Substitution in \eqref{eq:warped-radial} gives
$\mathcal R_s=(n-1)\kappa_\tau$.
Moreover, \eqref{eq:warped-radial} and
\eqref{eq:warped-tangential} give the exact difference
$\mathcal R_\theta-\mathcal R_s=(n-2)(1-(f')^2+ff'')/f^2$,
where, for \eqref{eq:singular-spindle},
\begin{equation*}
 1-(f_\tau')^2+f_\tau f_\tau''
 =1-c_\tau^2\beta_f\lambda_\tau^2
 \sin^{2\beta_f-2}(\lambda_\tau s)>0.
\end{equation*}
Thus the singular body satisfies \eqref{eq:sharpness-bound} with a strict
margin.

When $n=3$, one has $\beta_f=1$, $\beta_u=0$, and
$\lambda_0=\sqrt\kappa$.
Taking $c_0=\kappa^{-1/2}$ gives the round sphere and a constant $u$, so
there is nothing to smooth.
Assume henceforth that $n>3$.
Then $\beta_f>1$ and $\beta_u>0$.

Fix a small $r>0$ and abbreviate $F_r:=f_\tau(r)$ and
$v_r:=f_\tau'(r)$.
For small $r$, $0<v_r<1$ and
\begin{equation*}
 F_r=c_\tau\lambda_\tau^{\beta_f}r^{\beta_f}
 +O(r^{\beta_f+2}),
 \qquad
 v_r=c_\tau\beta_f\lambda_\tau^{\beta_f}r^{\beta_f-1}
 +O(r^{\beta_f+1}).
\end{equation*}
Define
\begin{equation*}
 R_r:=\frac{F_r}{\sqrt{1-v_r^2}},
 \qquad
 \ell_r:=R_r\arccos v_r,
 \qquad
 f_{\mathrm{cap}}(t):=R_r\sin(t/R_r).
\end{equation*}
Then
\begin{equation}\label{eq:cap-matching}
 \begin{aligned}
 f_{\mathrm{cap}}(\ell_r)&=F_r,
\quad\quad f_{\mathrm{cap}}'(\ell_r)=v_r,\\
 \ell_r&=\frac\pi2c_\tau\lambda_\tau^{\beta_f}r^{\beta_f}
 +O\bigl(r^{2\beta_f-1}+r^{\beta_f+2}\bigr)
 =o(r).
 \end{aligned}
\end{equation}

Let $q_r$ be a smooth nonpositive function on $[0,\ell_r]$ that is linear
near zero, is nonincreasing, and satisfies $q_r(0)=0$,
$q_r(\ell_r)=(\log u_\tau)'(r)$, and $|q_r|\le C/r$.
Set
$u_{\mathrm{cap}}(t):=u_\tau(r)
\exp\left(-\int_t^{\ell_r}q_r(a)\,da\right)$.
The metric
$g_{\mathrm{cap}}:=dt^2+f_{\mathrm{cap}}^{\,2}(t)g_{\mathbb S^{n-1}}$ is a
geodesic cap in the round sphere of radius $R_r$.
Since $f_{\mathrm{cap}}(0)=0$, $f_{\mathrm{cap}}'(0)=1$, and
$f_{\mathrm{cap}}$ has a smooth odd extension across zero,
$g_{\mathrm{cap}}$ extends smoothly across the pole $t=0$.
Moreover, $q_r(0)=0$ and $q_r$ is linear near zero, so
$u_{\mathrm{cap}}$ has a smooth positive even extension across the pole.
At $t=\ell_r$, both functions and their first derivatives match the
singular body.

The cap metric satisfies
$\operatorname{Ric}_{g_{\mathrm{cap}}}
=(n-1)R_r^{-2}g_{\mathrm{cap}}$.
Writing $H_r=f_{\mathrm{cap}}'/f_{\mathrm{cap}}$, the sign conditions on
$q_r$ give
\begin{equation*}
 \frac{\Delta_{g_{\mathrm{cap}}}u_{\mathrm{cap}}}{u_{\mathrm{cap}}}
 =q_r'+q_r^2+(n-1)H_rq_r
 \le q_r^2\le\frac{C}{r^2}.
\end{equation*}
Because $R_r/r^{\beta_f}\to
c_\tau\lambda_\tau^{\beta_f}>0$ and $\beta_f>1$, the cap satisfies
\begin{equation*}
 \operatorname{Ric}_{g_{\mathrm{cap}}}
 -\alpha
 \frac{\Delta_{g_{\mathrm{cap}}}u_{\mathrm{cap}}}{u_{\mathrm{cap}}}
 g_{\mathrm{cap}}
 >(n-1)\kappa g_{\mathrm{cap}}
\end{equation*}
for all sufficiently small $r$.

Attach identical caps at the two ends of the truncated body
$[r,\pi/\lambda_\tau-r]$ and reflect the data across the midpoint.
The resulting warped data are $C^1$ at the two interfaces and satisfy the
strict tensor inequality on both sides.
Lemma~\ref{lem:quantitative-smoothing} produces smooth data
$(g_{\tau,r},u_{\tau,r})$ on $\mathbb S^n$ satisfying
\eqref{eq:sharpness-bound}.
The smoothing changes only the warping functions, not the radial coordinate,
so the distance between the two poles is
$d_{g_{\tau,r}}(N,S)=\pi/\lambda_\tau-2r+2\ell_r$.
Indeed, every curve joining the poles has length at least the total variation
of the radial coordinate, and a radial meridian realizes this length.

Choose $\tau_j\downarrow0$ and then $r_j\downarrow0$ sufficiently fast for
the cap and smoothing constructions above.
Equation \eqref{eq:cap-matching} gives
$d_{g_{\tau_j,r_j}}(N,S)\longrightarrow
\pi/\sqrt{a_\alpha(n-1)\kappa}$.
Theorem~\ref{thm:positive-diameter} bounds every diameter by the right-hand
side of \eqref{eq:sharpness-limit}.
The pole distance therefore squeezes the diameters to this value, which is
the right-hand side of \eqref{eq:sharpness-limit}.
\end{proof}

The singular core \eqref{eq:singular-spindle} is the equality profile of
the radial inequality \eqref{eq:positive-riccati}.
Related singular profiles also occur in spectral band rigidity
models~\cite{ChaiSun2026}; the capping argument above shows that the
Shen--Ye constant is nevertheless optimal within the smooth closed class.
For $n>3$, the constructed sequence degenerates:
$\max u_j/\min u_j\to\infty$, and the curvature of the two caps is
unbounded.

\section{Polynomial growth for mixed radial balls}\label{sec:polynomial}

We now prove the flexible mixed radial growth theorem.
For an admissible triple, first set
\begin{align}
 C_\alpha&:=\frac1\alpha-\frac{n-2}{n-1},
 &b&:=\frac p\alpha+\frac{n-3}{n-1},\label{eq:C-b}\\
 q&:=\frac1{n-1}-\frac{b^2}{4C_\alpha}.
 \label{eq:q-coefficient}
\end{align}
The first admissibility inequality is equivalent to $q>0$.
The second one will appear precisely as the integrability condition for the
negative tail of a mixed radial ray.

Fix a regular radial ray and suppress its angular variable.
Notice that $C_\alpha>0$.
Along this ray, let $\rho(s):=\int_0^su^p(t)\,dt$ and
$\tau:=\log\rho$.
Then $\tau_s=u^p/\rho>0$ and
$d/d\tau=(1/\tau_s)d/ds$.
Normalize the two logarithmic derivatives by $X:=m/\tau_s$ and
$Y:=\xi/\tau_s$.

The normalized variables satisfy an autonomous Riccati inequality.

\begin{lemma}\label{lem:dimensionless}
Define
\begin{equation}\label{eq:W}
 W:=Y+\frac{b}{2C_\alpha}X.
\end{equation}
Then
\begin{equation}\label{eq:X-inequality}
 X_\tau
 \le X-qX^2-C_\alpha W^2
 -(n-1)\kappa\rho^2u^{-2p}
 \le X-qX^2-C_\alpha W^2.
\end{equation}
\end{lemma}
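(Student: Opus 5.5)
The plan is to compute $X_\tau$ directly from the definitions, insert the exact Riccati identity \eqref{eq:exact-riccati} of Proposition~\ref{prop:riccati}, and complete the square in $Y$. The computation stays on a fixed regular radial ray before the $h$-cut time, so $m$, $\xi$, $\rho>0$ and $\tau_s=u^p/\rho>0$ are all smooth there.

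\emph{Step 1: differentiate $X$.} Write $X=m\rho u^{-p}$. Since $\rho_s=u^p$ and $(\log u)_s=\xi/\alpha$, we have
\[
 (\rho u^{-p})_s=1-\frac p\alpha\,\xi\rho u^{-p}.
\]
Hence
\[
 X_s=m_s\rho u^{-p}+m-\frac p\alpha\, m\xi\rho u^{-p}.
\]
Dividing by $\tau_s=u^p/\rho$ gives the exact identity
\[
 X_\tau=\frac{m_s}{\tau_s^2}+X-\frac p\alpha XY .
\]

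\emph{Step 2: insert the Riccati identity.} Drop the three nonpositive terms on the right of \eqref{eq:exact-riccati}; they are $-u^{2\alpha}|\mathring{\mathcal S}|_h^2$, $-\mathcal E_\alpha(T)$ and $-\alpha^{-1}|\nabla^\perp_T\phi|_g^2$. This gives
\[
 m_s\le-\frac1{n-1}m^2-\frac{n-3}{n-1}m\xi-C_\alpha\xi^2-(n-1)\kappa .
\]
Dividing by $\tau_s^2$ turns $m$ and $\xi$ into $X$ and $Y$, and turns the constant term into $-(n-1)\kappa\rho^2u^{-2p}$. Combined with Step~1, the two mixed terms merge with total coefficient $\frac{n-3}{n-1}+\frac p\alpha=b$. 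We obtain
\[
 X_\tau\le X-\frac1{n-1}X^2-bXY-C_\alpha Y^2-(n-1)\kappa\rho^2u^{-2p}.
\]

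\emph{Step 3: complete the square.} Since $C_\alpha>0$ for $\alpha<(n-1)/(n-2)$, we can write
\[
 C_\alpha Y^2+bXY=C_\alpha W^2-\frac{b^2}{4C_\alpha}X^2,
\]
with $W$ as in \eqref{eq:W}. The $X^2$ coefficient then becomes $-q$, by \eqref{eq:q-coefficient}, and this is the first inequality in \eqref{eq:X-inequality}. The second inequality follows because $\kappa\ge0$.

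The main point needing care is Step~1. One must track the extra term coming from differentiating $u^{-p}$ in the normalization, and see that it combines with the $(n-3)/(n-1)$ cross term to give exactly the coefficient $b$ of \eqref{eq:C-b}. Everything else is algebra.
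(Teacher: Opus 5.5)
Your proposal is correct and is essentially the paper's proof: the paper derives the same identity $X_\tau = m_s/\tau_s^2 - X\bigl((p/\alpha)Y-1\bigr)$, obtained from $(\tau_s)_s/\tau_s=(p/\alpha)\xi-\tau_s$ rather than by differentiating $m\rho u^{-p}$ directly, then discards the nonpositive right-hand side of \eqref{eq:exact-riccati}, divides by $\tau_s^2$, and completes the square in $Y$. Your tracking of the mixed coefficient $\frac{n-3}{n-1}+\frac p\alpha=b$, and your use of the standing assumption $\kappa\ge0$ for the second inequality, match the paper exactly.
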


\begin{proof}
Differentiating $\tau_s=u^p/\rho$ gives
$(\tau_s)_s/\tau_s=(p/\alpha)\xi-\tau_s$.
Consequently,
\begin{equation}\label{eq:X-derivative}
 X_\tau
 =\frac{m_s}{\tau_s^2}
 -X\left(\frac p\alpha Y-1\right).
\end{equation}
Discard the nonpositive right-hand side of \eqref{eq:exact-riccati}, divide
by $\tau_s^2$, and use \eqref{eq:X-derivative} to obtain
\begin{equation*}
 X_\tau
 \le X-\frac1{n-1}X^2-bXY-C_\alpha Y^2
 -(n-1)\kappa\rho^2u^{-2p}.
\end{equation*}
The Cauchy--Schwarz inequality applied to the terms involving $Y$ proves
\eqref{eq:X-inequality}.
\end{proof}

For the target density, set
\begin{equation}\label{eq:delta-beta}
 \delta:=\frac{\sigma-p-\alpha}{\alpha},
 \qquad
 \beta:=1-\frac{\delta b}{2C_\alpha}.
\end{equation}
The density of $u^\sigma dV_g$ relative to $d\rho$ is
\begin{equation}\label{eq:target-density}
 \mathcal J:=u^{\sigma-p-\alpha}j,
 \qquad
 u^\sigma J\,ds=\mathcal J\,d\rho.
\end{equation}
Direct differentiation and substitution of \eqref{eq:W} give
\begin{equation}\label{eq:target-derivative}
 (\log\mathcal J)_\tau=X+\delta Y=\beta X+\delta W.
\end{equation}

The correction coefficient and the resulting growth exponent are
\begin{align}
 \Gamma&:=\sqrt{\beta^2+\frac{q}{C_\alpha}\delta^2}>0,
 \label{eq:Gamma}\\
 K(\alpha,p,\sigma)&:=1+\frac{\beta+\Gamma}{2q}.
 \label{eq:K}
\end{align}
Before estimating the density, we record the lower bound for the exponent.

\begin{lemma}\label{lem:K-lower}
For every admissible triple, $K\ge n$.
Equality holds if and only if $\sigma=np+(n-2)\alpha$.
\end{lemma}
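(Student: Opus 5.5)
The plan is to reduce $K\ge n$ to an elementary inequality between $\beta$, $\Gamma$ and $q$, and then to show that the squared form of this inequality is a perfect square in $\delta$. By \eqref{eq:K}, the claim $K\ge n$ is equivalent to
\begin{equation*}
 \Gamma\ge Q-\beta,\qquad Q:=2(n-1)q,
\end{equation*}
because $q>0$ by the first admissibility condition. Using \eqref{eq:q-coefficient}, I will rewrite $Q$ as
\begin{equation*}
 Q=2-\frac{(n-1)b^2}{2C_\alpha},\qquad\text{so that}\qquad 2-Q=\frac{(n-1)b^2}{2C_\alpha}.
\end{equation*}
Here $Q>0$ and $C_\alpha>0$.

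The argument splits according to the sign of $Q-\beta$.

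If $Q-\beta\le0$, then $\Gamma>0$ from \eqref{eq:Gamma} gives $\Gamma>Q-\beta$, and the inequality is strict.

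If $Q-\beta>0$, I square both sides. Since $\Gamma^2=\beta^2+(q/C_\alpha)\delta^2$, the claim becomes
\begin{equation*}
 \frac{q}{C_\alpha}\delta^2+2Q\beta-Q^2\ge0.
\end{equation*}
Substituting $\beta=1-\delta b/(2C_\alpha)$ from \eqref{eq:delta-beta} turns the left side into a quadratic in $\delta$:
\begin{equation*}
 \frac1{C_\alpha}\Bigl(q\delta^2-Qb\,\delta\Bigr)+Q(2-Q).
\end{equation*}
Now use $q=Q/(2(n-1))$ and $Q(2-Q)=Q(n-1)b^2/(2C_\alpha)$. The expression then factors as
\begin{equation*}
 \frac{Q}{2(n-1)C_\alpha}\Bigl(\delta^2-2(n-1)b\,\delta+(n-1)^2b^2\Bigr)
 =\frac{Q}{2(n-1)C_\alpha}\bigl(\delta-(n-1)b\bigr)^2\ge0.
\end{equation*}
This proves $K\ge n$ in this case as well. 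Equality forces $\delta=(n-1)b$.

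For the equality characterization, I still need to confirm that the case $Q-\beta\le0$ never contains the equality value. At $\delta=(n-1)b$ one computes
\begin{equation*}
 \beta=1-\frac{(n-1)b^2}{2C_\alpha}=Q-1,
\end{equation*}
so $Q-\beta=1>0$ and $\Gamma=1=Q-\beta$. Thus equality holds exactly when $\delta=(n-1)b$.

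It remains to translate this condition back to $\sigma$. By \eqref{eq:C-b} and \eqref{eq:delta-beta}, the condition $\delta=(n-1)b$ reads
\begin{equation*}
 \sigma-p-\alpha=(n-1)p+(n-3)\alpha,
\end{equation*}
that is, $\sigma=np+(n-2)\alpha$.

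The main obstacle is spotting the perfect-square factorization; it depends on writing $2-Q$ in terms of $b^2/C_\alpha$ before expanding. The second admissibility condition \eqref{eq:admissible-sigma} is not needed for this lemma.
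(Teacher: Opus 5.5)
Your proposal is correct and follows the paper's proof essentially step for step. Both arguments reduce the claim to $\beta+\Gamma\ge 2(n-1)q$, note the strict case $2(n-1)q-\beta\le0$, establish the identity $\Gamma^2-(2(n-1)q-\beta)^2=\frac{q}{C_\alpha}(\delta-(n-1)b)^2$ (your prefactor $Q/(2(n-1)C_\alpha)$ equals $q/C_\alpha$), and verify that $2(n-1)q-\beta=1$ at $\delta=(n-1)b$. The only difference is that you write out the algebra the paper calls ``direct substitution.''
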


\begin{proof}
By \eqref{eq:K}, the desired inequality is equivalent to
\begin{equation}\label{eq:K-goal}
 \beta+\Gamma\ge2(n-1)q.
\end{equation}
If $2(n-1)q-\beta\le0$, this is immediate and strict.
Otherwise, direct substitution from \eqref{eq:C-b},
\eqref{eq:q-coefficient}, \eqref{eq:delta-beta}, and \eqref{eq:Gamma} gives
\begin{equation*}
 \Gamma^2-\bigl(2(n-1)q-\beta\bigr)^2
 =\frac{q}{C_\alpha}
 \bigl(\delta-(n-1)b\bigr)^2\ge0,
\end{equation*}
which proves \eqref{eq:K-goal}.
Equality holds precisely when $\delta=(n-1)b$; in that case
$2(n-1)q-\beta=1$, so equality indeed occurs.
Expanding $\delta=(n-1)b$ gives
$\sigma=np+(n-2)\alpha$.
\end{proof}

The next estimate controls the phase in which the normalized radial mean
curvature is bounded below.

\begin{lemma}\label{lem:controlled-phase}
Along every regular radial ray,
\begin{equation}\label{eq:corrected-monotone}
 \frac d{d\tau}
 \left(\log\mathcal J+\Gamma X-(K-1)\tau\right)\le0.
\end{equation}
There are constants $\rho_*>0$ and $C>0$, independent of the radial
direction, such that on every interval with $\rho\ge\rho_*$ and $X\ge-1$,
\begin{equation}\label{eq:J-controlled}
 \mathcal J(\rho)\le C\rho^{K-1}.
\end{equation}
\end{lemma}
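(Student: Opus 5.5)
The plan is to establish \eqref{eq:corrected-monotone} by a pointwise completing-the-square argument. Lemma~\ref{lem:dimensionless} supplies the Riccati inequality and \eqref{eq:target-derivative} supplies the evolution of $\log\mathcal J$. The bound \eqref{eq:J-controlled} should then follow by integrating the monotone quantity from a fixed small radius $\rho_*$, where everything is controlled uniformly by the pole expansions.

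\emph{Step 1: the monotonicity formula.} Since $\Gamma>0$, multiply \eqref{eq:X-inequality} by $\Gamma$ and add \eqref{eq:target-derivative}. This gives
\begin{equation*}
 \frac{d}{d\tau}\bigl(\log\mathcal J+\Gamma X\bigr)
 \le (\beta+\Gamma)X-\Gamma qX^2+\delta W-\Gamma C_\alpha W^2 .
\end{equation*}
The right side is a concave quadratic in the independent quantities $X$ and $W$; recall that $q>0$ by the first admissibility condition and $C_\alpha>0$. Maximizing separately in $W$ and in $X$ bounds it by
\begin{equation*}
 \frac{(\beta+\Gamma)^2}{4\Gamma q}+\frac{\delta^2}{4\Gamma C_\alpha}.
\end{equation*}
We then subtract $K-1=(\beta+\Gamma)/(2q)$ and put everything over the common denominator $4\Gamma q$. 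The numerator becomes
\begin{equation*}
 (\beta+\Gamma)^2-2\Gamma(\beta+\Gamma)+\frac{q}{C_\alpha}\delta^2
 =\beta^2-\Gamma^2+\frac{q}{C_\alpha}\delta^2 .
\end{equation*}
This vanishes by the definition \eqref{eq:Gamma} of $\Gamma$, which yields \eqref{eq:corrected-monotone}. The computation also explains the choice of $\Gamma$ and $K$: they are exactly the values that make this bound sharp.

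\emph{Step 2: uniform initial data.} Let $\mathcal Q:=\log\mathcal J+\Gamma X-(K-1)\tau$. We cannot integrate \eqref{eq:corrected-monotone} from the pole, because $\mathcal Q$ need not stay bounded there. Indeed, near $o$ the pole expansions give $m=(n-1)/s+O(1)$ and $\tau_s=u^p/\rho=1/s+O(1)$, so $X\to n-1$. They also give $\mathcal J=c(o)\rho^{\,n-1}(1+O(\rho))$ with $c(o)$ depending only on $u(o),\alpha,p,\sigma$. Hence $\mathcal Q\approx (n-K)\log\rho$, which blows up when $K>n$. We therefore fix a small $\rho_*>0$ with the following property: for every $\theta\in\mathbb S_oM$, the parameter value $\rho=\rho_*$ is reached at some $g$-arclength $s_*(\theta)$ that is bounded above and lies strictly below the cut time $c_\alpha(\theta)$. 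This is possible because the $h_\alpha$-injectivity radius at $o$ is positive, so $c_\alpha$ is bounded below uniformly in $\theta$. By smooth dependence of the $h_\alpha$-exponential map on $\theta$ over a compact neighborhood of $o$, the quantities $\mathcal J(\rho_*)$ and $X(\rho_*)$ are continuous in $\theta$ on the compact sphere. Hence $\mathcal Q(\log\rho_*)\le C_0$ uniformly in $\theta$.

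\emph{Step 3: integration.} On any regular ray and any $\rho\ge\rho_*$ before the cut time, \eqref{eq:corrected-monotone} gives $\mathcal Q(\tau)\le C_0$, that is,
\begin{equation*}
 \log\mathcal J\le C_0-\Gamma X+(K-1)\log\rho .
\end{equation*}
Whenever $X\ge-1$ we have $-\Gamma X\le\Gamma$, so $\mathcal J\le e^{C_0+\Gamma}\rho^{K-1}$, which is \eqref{eq:J-controlled}. Note that monotonicity holds on the whole ray, so the restriction $X\ge-1$ is used only at the evaluation point. The main obstacle is Step 2: the uniform choice of $\rho_*$ and the uniform bounds on $\mathcal J(\rho_*)$ and $X(\rho_*)$. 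Step 1 is pure algebra once $\Gamma>0$ is granted. I would handle Step 2 via normal coordinates for $h_\alpha$ at $o$, using the explicit relations $j=u^\alpha J$ and $\rho=\int u^p\,ds$.
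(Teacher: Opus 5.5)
Your proof is correct and follows the paper's argument. The steps match: the same completion of squares in $(X,W)$, whose maximum $(\beta+\Gamma)^2/(4\Gamma q)+\delta^2/(4\Gamma C_\alpha)$ equals $K-1$; the same uniform starting radius $\rho_*$, obtained from the positive $h_\alpha$-injectivity radius and compactness of $\mathbb S_oM$; and integration combined with $-\Gamma X\le\Gamma$. The one difference is that the paper also arranges $X(\rho_*,\theta)>-1$. That is not needed for this lemma, but it is used later in Proposition~\ref{prop:raywise} to place the first crossing of $X=-1$ beyond $\rho_*$.
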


\begin{proof}
Equations \eqref{eq:X-inequality} and
\eqref{eq:target-derivative} give
\begin{align*}
 \frac d{d\tau}(\log\mathcal J+\Gamma X)
 \le{}&(\beta+\Gamma)X+\delta W
 -\Gamma qX^2-\Gamma C_\alpha W^2.
\end{align*}
The maximum of the right-hand side over $(X,W)\in\mathbb R^2$ is
$(\beta+\Gamma)^2/(4\Gamma q)+\delta^2/(4\Gamma C_\alpha)$.
By \eqref{eq:Gamma} and \eqref{eq:K}, this maximum equals $K-1$.
This proves \eqref{eq:corrected-monotone}.

It remains to choose a uniform starting point.
As $s\downarrow0$, uniformly in the angular variable,
\begin{equation}\label{eq:pole-polynomial}
 \begin{aligned}
 j(s,\theta)&=u^\alpha(o)s^{n-1}(1+O(s)),\\
 \rho(s,\theta)&=u^p(o)s(1+O(s)),\\
 X(s,\theta)&=n-1+O(s).
 \end{aligned}
\end{equation}
The $h_\alpha$-injectivity radius at $o$ is positive, and the initial sphere
is compact.
Thus one may choose a single $\rho_*>0$ such that all rays are regular up to
$\rho_*$, $X(\rho_*,\theta)>-1$, and
$\log\mathcal J+\Gamma X-(K-1)\log\rho$ is uniformly bounded above there.
Integrating \eqref{eq:corrected-monotone} from this positive radius and using
$X\ge-1$ proves \eqref{eq:J-controlled}.
Starting at a positive radius is necessary when $K>n$, because the corrected
expression need not have a finite limit at the pole.
\end{proof}

Suppose now that $X$ first reaches $-1$ at $\tau=\tau_1$.
At any point with $X=-1$, the right-hand side of
\eqref{eq:X-inequality} is at most $-1-q<0$.
Therefore $X$ cannot cross back above $-1$.
On the remaining part of the ray, set $Z:=-X>1$.
Then
\begin{equation}\label{eq:Z-growth}
 Z_\tau\ge Z+qZ^2+C_\alpha W^2.
\end{equation}
The volume density relative to $d\tau$ is $P:=\rho\mathcal J$, so that
$\mathcal J\,d\rho=P\,d\tau$, and
\begin{equation}\label{eq:log-P}
 (\log P)_\tau=1-\beta Z+\delta W.
\end{equation}

The second admissibility condition is exactly what makes the negative tail
integrable.

\begin{lemma}\label{lem:tail}
The inequality
\begin{equation}\label{eq:tail-abstract}
 \delta^2<4C_\alpha(\beta+q)
\end{equation}
is equivalent to \eqref{eq:admissible-sigma}.
Under this condition, there is a number $c\in(0,1)$, depending only on the
fixed parameters, such that for every $Z\ge1$ and $W\in\mathbb R$,
\begin{equation}\label{eq:tail-comparison}
 1-\beta Z+\delta W
 \le c\left(1+qZ+C_\alpha\frac{W^2}{Z}\right).
\end{equation}
\end{lemma}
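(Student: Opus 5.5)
The lemma has two parts: an algebraic equivalence, and a quadratic-form inequality. For the equivalence I will expand both sides of \eqref{eq:tail-abstract} using the definitions \eqref{eq:C-b}, \eqref{eq:q-coefficient} and \eqref{eq:delta-beta}. First,
\begin{equation*}
 4C_\alpha(\beta+q)
 =4C_\alpha\Bigl(1+\frac1{n-1}\Bigr)-2\delta b-b^2
 =\frac{4nC_\alpha}{n-1}-2\delta b-b^2 .
\end{equation*}
So \eqref{eq:tail-abstract} is the same as $(\delta+b)^2<4nC_\alpha/(n-1)$. Next, the $p$-terms cancel in
\begin{equation*}
 \delta+b=\frac{\sigma}{\alpha}-1+\frac{n-3}{n-1}=\frac{(n-1)\sigma-2\alpha}{(n-1)\alpha}.
\end{equation*}
Moreover $4nC_\alpha/(n-1)=4n\bigl((n-1)-(n-2)\alpha\bigr)/\bigl((n-1)^2\alpha\bigr)$. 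Multiplying by the positive quantity $(n-1)^2\alpha^2$ then gives exactly \eqref{eq:admissible-sigma}. This step is pure bookkeeping.

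For \eqref{eq:tail-comparison}, I will fix $c\in(0,1)$, to be chosen later, and multiply through by $Z\ge1>0$. The inequality becomes
\begin{equation*}
 Z-\beta Z^2+\delta WZ\le cZ+cqZ^2+cC_\alpha W^2 .
\end{equation*}
Since $C_\alpha>0$, I eliminate $W$ by maximizing the concave quadratic $\delta WZ-cC_\alpha W^2$ in $W$. Its maximum is $\delta^2Z^2/(4cC_\alpha)$. It therefore suffices to show that
\begin{equation*}
 (1-c)+Z\Bigl(\frac{\delta^2}{4cC_\alpha}-\beta-cq\Bigr)\le0
 \qquad\text{for all }Z\ge1 .
\end{equation*}
If the coefficient of $Z$ is nonpositive, the left-hand side is largest at $Z=1$. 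So it is enough to arrange
\begin{equation*}
 \frac{\delta^2}{4cC_\alpha}\le \beta+cq-(1-c).
\end{equation*}
This condition also forces the coefficient of $Z$ to be at most $-(1-c)\le0$, which justifies reducing to $Z=1$.

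The only real step is choosing $c$. At $c=1$ the required inequality reads $\delta^2/(4C_\alpha)\le\beta+q$, and this holds strictly by \eqref{eq:tail-abstract}. Both sides depend continuously on $c$ near $1$. Hence some $c\in(0,1)$ sufficiently close to $1$ still satisfies it, and this $c$ depends only on $(n,\alpha,p,\sigma)$.

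The one point to watch is that $\beta$ may be negative. This causes no problem, because the argument uses only the combined inequality at $c$ near $1$. I expect no genuine obstacle; the main risk is an arithmetic slip in the expansion of $4C_\alpha(\beta+q)$.
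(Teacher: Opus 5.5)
Your proposal is correct and matches the paper's proof: your algebra reduces \eqref{eq:tail-abstract} to $(\delta+b)^2<4nC_\alpha/(n-1)$, which is the paper's identity for $4C_\alpha(\beta+q)-\delta^2$. Your elimination of $W$ and reduction to $Z=1$ give the paper's scalar condition $\beta+cq-\delta^2/(4cC_\alpha)\ge1-c$ (the minimum over $t=W/Z$ of $\beta+cq-\delta t+cC_\alpha t^2$), secured by continuity for $c$ near $1$. The only difference is the order of operations (you multiply by $Z$ before optimizing in $W$), which is immaterial.
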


\begin{proof}
Direct substitution gives
\begin{equation*}
 4C_\alpha(\beta+q)-\delta^2
 =\frac{
 4n\alpha\bigl((n-1)-(n-2)\alpha\bigr)
 -\bigl((n-1)\sigma-2\alpha\bigr)^2
 }{(n-1)^2\alpha^2}.
\end{equation*}
This proves the equivalence.

At $c=1$, the quadratic polynomial
$\beta+q-\delta t+C_\alpha t^2$ has positive minimum by
\eqref{eq:tail-abstract}.
For $c<1$ sufficiently close to one, continuity of the minimum gives
$\beta+cq-\delta t+cC_\alpha t^2\ge1-c$ for $t\in\mathbb R$.
Set $t=W/Z$, multiply by $Z\ge1$, and rearrange to obtain
\eqref{eq:tail-comparison}.
\end{proof}

We can now control the entire contribution of a single ray.

\begin{proposition}\label{prop:raywise}
There is a constant $C>0$, independent of the radial direction, such that on
every finite regular radial segment,
\begin{equation}\label{eq:raywise-polynomial}
 \int_0^{\rho(s)}\mathcal J(t)\,dt
 \le C\left(1+\rho^K(s)\right).
\end{equation}
\end{proposition}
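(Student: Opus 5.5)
The plan is to split the ray at the first time $\tau_1$ (if any) where $X$ reaches $-1$. Then I bound the two phases separately, using Lemma~\ref{lem:controlled-phase} on the first and Lemma~\ref{lem:tail} on the second.

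\emph{Initial and controlled parts.} Near the pole, $\rho\le\rho_*$, the expansions \eqref{eq:pole-polynomial} give $\mathcal J(\rho)\le C\rho^{n-1}$ uniformly in $\theta$. Hence $\int_0^{\min(\rho,\rho_*)}\mathcal J\le C$. Since $X\ge -1$ holds for all $\tau\le\tau_1$, and $X(\rho_*)>-1$, we get $\tau_1>\log\rho_*$. On $[\rho_*,\rho_1]$, where $\rho_1=e^{\tau_1}$ (or the endpoint $\rho(s)$ if $X$ never reaches $-1$), the estimate \eqref{eq:J-controlled} gives $\mathcal J\le C t^{K-1}$. Integrating,
\[
\int_{\rho_*}^{\min(\rho_1,\rho(s))}\mathcal J\,dt\le \frac{C}{K}\rho^K(s).
\]
In particular $P(\tau_1)=\rho_1\mathcal J(\rho_1)\le C\rho_1^{K}$.

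\emph{Negative tail.} For $\tau\ge\tau_1$ we have $Z>1$, so \eqref{eq:Z-growth} and \eqref{eq:log-P} apply, together with \eqref{eq:tail-comparison}. The right side of \eqref{eq:tail-comparison} is $c\,(Z+qZ^2+C_\alpha W^2)/Z\le c\,Z_\tau/Z$. Hence
\[
(\log P)_\tau\le c\,(\log Z)_\tau, \qquad\text{so}\qquad P(\tau)\le P(\tau_1)\bigl(Z(\tau)/Z(\tau_1)\bigr)^c .
\]
This alone does not give integrability, since $Z$ grows. I would therefore also use the blow-up structure: from $Z_\tau\ge qZ^2$, $Z$ blows up within $\tau$-time at most $1/(qZ(\tau_1))\le 1/q$. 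More precisely, $Z(\tau)\ge \bigl(Z(\tau_1)^{-1}-q(\tau-\tau_1)\bigr)^{-1}$, which would force the cut time to occur before $\tau_1+1/q$. Then
\[
\int_{\tau_1}^{\tau}P\,d\tau'\le P(\tau_1)\int Z^c\,d\tau'.
\]
Changing variables with $d\tau'\le dZ/(qZ^2)$ gives
\[
\int Z^c\,d\tau'\le \frac1q\int_1^\infty Z^{c-2}\,dZ<\infty,
\]
because $c<1$. Thus the tail contributes at most $C\,P(\tau_1)\le C\rho_1^K\le C\rho^K(s)$.

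I expect the main obstacle to be this last step: converting the comparison $(\log P)_\tau\le c(\log Z)_\tau$ into a finite integral requires using $Z_\tau\ge qZ^2$ for the change of variables, and this is exactly where $c<1$ is essential. The case $\tau_1<\log\rho_*$ is excluded by the choice of $\rho_*$. Collecting the three pieces yields \eqref{eq:raywise-polynomial} with $C$ independent of $\theta$.
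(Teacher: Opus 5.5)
Your proposal is correct and follows the paper's proof essentially step for step: a uniform pole bound for $\rho\le\rho_*$, Lemma~\ref{lem:controlled-phase} up to the first crossing $\tau_1$, and on the tail the inequality $(\log P)_\tau\le c(\log Z)_\tau$ combined with the change of variables $d\tau\le dZ/(qZ^2)$, which gives $\int_{\tau_1}^{\tau}P\,d\tau'\le P(\tau_1)/(q(1-c))\le C\rho_1^K$. Two minor points: the finite-time blow-up remark is not needed, since the change of variables already gives the bound, and $\tau_1>\log\rho_*$ actually follows from the fact that $X$ cannot return above $-1$ once it reaches it, not from $X\ge-1$ before $\tau_1$.
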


\begin{proof}
The part $0\le\rho\le\rho_*$ is uniformly bounded by
\eqref{eq:pole-polynomial}.
As long as $X\ge-1$, Lemma~\ref{lem:controlled-phase} gives
\begin{equation*}
 \int_{\rho_*}^{\rho}\mathcal J(t)\,dt
 \le C\int_{\rho_*}^{\rho}t^{K-1}\,dt
 \le C(1+\rho^K).
\end{equation*}

If $X$ never reaches $-1$, this proves the proposition.
Otherwise, let $\rho_1=e^{\tau_1}$ be the first crossing radius.
By \eqref{eq:Z-growth}, \eqref{eq:log-P}, and Lemma~\ref{lem:tail}, we have
$(\log P)_\tau\le c(\log Z)_\tau$.
Since $Z(\tau_1)=1$, it follows that
$P(\tau)\le P(\tau_1)Z^c(\tau)$.
Also, \eqref{eq:Z-growth} implies $d\tau\le dZ/(qZ^2)$.
Hence the entire negative tail satisfies
\begin{equation}\label{eq:tail-volume}
 \int_{\tau_1}^{\tau}P(t)\,dt
 \le\frac{P(\tau_1)}{q}
 \int_1^\infty Z^{c-2}\,dZ
 =\frac{P(\tau_1)}{q(1-c)}.
\end{equation}
At the crossing, \eqref{eq:J-controlled} yields
$P(\tau_1)=\rho_1\mathcal J(\rho_1)\le C\rho_1^K$.
Since $\rho$ is increasing and $K>0$, combining the controlled part with
\eqref{eq:tail-volume} proves \eqref{eq:raywise-polynomial}.
\end{proof}

\begin{proof}[Proof of Theorem~\ref{thm:polynomial}]
It remains to globalize Proposition~\ref{prop:raywise}.

For $S_0>0$, truncate every ray at $g$-arclength $S_0$, at its cut time,
or when $\rho_p=R$, whichever occurs first, and denote the resulting radial
set by $\Omega_{R,S_0}^{\alpha,p}(o)$.
The polar map is one-to-one before the cut time.
Using \eqref{eq:target-density} and Proposition~\ref{prop:raywise} on each
truncated ray gives
$\int_{\Omega_{R,S_0}^{\alpha,p}(o)}u^\sigma\,dV_g\le C(1+R^K)$.
The angular factor $\omega_{n-1}$ has been absorbed into $C$.
Letting $S_0\to\infty$ and using monotone convergence proves
\eqref{eq:polynomial-growth}.
The limiting argument also covers rays on which $\rho_p<R$ along their
entire infinite continuation.
\end{proof}

If $h_\alpha$ is complete, the theorem contains the conformal metric balls
as a direct specialization.
Whenever $(\alpha,\alpha,n\alpha)$ is admissible,
\begin{equation*}
 \operatorname{Vol}_{h_\alpha}(B_R^{h_\alpha}(o))
 =\int_{B_R^{h_\alpha}(o)}u^{n\alpha}\,dV_g
 \le C\left(1+R^{K(\alpha,\alpha,n\alpha)}\right).
\end{equation*}
Other choices of $p$ produce genuinely mixed radial balls rather than metric
balls, but the proof is unchanged.

\section{The spectral Bonnet--Myers and volume comparison theorem}\label{sec:AX}

Theorem~\ref{thm:positive-diameter} gives a diameter estimate independent of
$u$ when $0<\alpha<4/(n-1)$.
We prove Corollary~\ref{cor:intro-AX} on the wider model-comparison range,
where the diameter estimate retains the oscillation of $u$.

We first record the oscillation-dependent diameter estimate on this wider
parameter range.

\begin{lemma}\label{lem:diameter}
Suppose $g$ is complete, $0<\alpha\le(n-1)/(n-2)$, $\kappa>0$, and
\eqref{eq:spectral} holds.
If $0<\inf_M u\le\sup_M u<\infty$, then
\begin{equation}\label{eq:diameter-bound}
 \operatorname{diam}_g(M)
 \le\frac{\pi}{\sqrt\kappa}
 \left(\frac{\sup_M u}{\inf_M u}\right)^\eta.
\end{equation}
In particular, $M$ is compact.
\end{lemma}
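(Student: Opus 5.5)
The proof will follow the pattern of Theorem~\ref{thm:positive-diameter}: first bound the $g$-arclength cut time $c_\alpha(\theta)$ of every radial $h_\alpha$-geodesic, then use completeness of $h_\alpha$ to join any two points by a minimizing $h_\alpha$-geodesic. Here the cut-time bound will come from the Sturm comparison in Lemma~\ref{lem:root-comparison}, since the Riccati argument of Lemma~\ref{lem:radial-cutoff} is unavailable in this range of $\alpha$.

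\textbf{Completeness of $h_\alpha$.} Since $u\ge\inf_M u>0$, we have $h_\alpha=u^{2\alpha}g\ge(\inf u)^{2\alpha}g$. Completeness of $g$ then transfers to $h_\alpha$, because $h_\alpha$-Cauchy sequences are $g$-Cauchy. As a result, $c_\alpha(\theta)$ is a genuine cut time, with no truncation at a maximal existence time.

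\textbf{Radial cutoff.} Fix $o\in M$ and a regular ray $\gamma_\theta$. Lemma~\ref{lem:root-comparison} shows that $F=j^{1/(n-1)}>0$ before the cut time and $F\le u^{\alpha/(n-1)}(o)y$. The argument in that proof also shows that the first positive zero of $y$ cannot occur before $c_\alpha(\theta)$. Hence $c_\alpha(\theta)$ is at most the first zero of $y$, and it remains to bound that zero.

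\textbf{Bounding the first zero of $y$.} I change variables to $z=\int_0^s U^{-\eta}\,dt$ as in \eqref{eq:z}. Then $Y_{zz}+\kappa U^{2\eta}Y=0$ with $U^{2\eta}\ge(\inf u)^{2\eta}$. By Sturm comparison with $s_{\kappa(\inf u)^{2\eta}}$, $Y$ vanishes at some $z\le \pi/(\sqrt\kappa(\inf u)^\eta)$. Since $dz\ge (\sup u)^{-\eta}ds$, this gives
\[
 s\le (\sup u)^\eta z\le \frac{\pi}{\sqrt\kappa}\Bigl(\frac{\sup u}{\inf u}\Bigr)^\eta .
\]
Because $\eta\ge0$ and $\inf u\le U\le\sup u$, these inequalities run in the needed directions. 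Note that the first zero of $y$ may come before $c_\alpha(\theta)$ would otherwise end; that only strengthens the conclusion $c_\alpha(\theta)\le$ the bound above.

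\textbf{Conclusion.} For $x,y\in M$, Hopf--Rinow for the complete metric $h_\alpha$ gives a minimizing $h_\alpha$-geodesic from $x$ to $y$. This geodesic lies within the cut time from $x$, so its $g$-length, and hence $d_g(x,y)$, is at most the bound. This proves \eqref{eq:diameter-bound}, and compactness follows from Hopf--Rinow for $g$.

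\textbf{Main obstacle.} The delicate step is the claim that $c_\alpha(\theta)$ cannot exceed the first zero of $y$. The Wronskian argument only gives $F\le Cy$ while both are positive, so one must check that $F$ stays positive right up to the cut time. This holds because $j>0$ on the regular part of the ray, so a zero of $y$ strictly before the cut time contradicts $F>0$. A secondary subtlety is the endpoint case $\alpha=(n-1)/(n-2)$, where $C_\alpha=0$. The discarded square terms still vanish or are nonnegative there, so \eqref{eq:F-second-order} holds and the argument goes through unchanged.
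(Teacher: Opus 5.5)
Your proposal is correct and follows the paper's proof essentially step for step. The shared steps are: completeness of $h_\alpha$ from $\inf_M u>0$; the cut time bounded by the first zero of $y$ via Lemma~\ref{lem:root-comparison}; Sturm comparison of $Y_{zz}+\kappa U^{2\eta}Y=0$ against $s_{\kappa(\inf_M u)^{2\eta}}$; the conversion $s\le(\sup_M u)^\eta z$; and Hopf--Rinow applied to minimizing $h_\alpha$-geodesics.

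The only blemish is your aside that the first zero of $y$ ``may come before'' $c_\alpha(\theta)$. Lemma~\ref{lem:root-comparison} rules this out, since it forces $c_\alpha(\theta)$ to be at most that zero. The remark plays no role in the argument.
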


\begin{proof}
The lower bound for $u$ and the completeness of $g$ imply that
$h_\alpha=u^{2\alpha}g$ is complete.
Let $\gamma$ be any minimizing $h_\alpha$-geodesic from a fixed point and
parametrize it by $g$-arclength.
Use the variable $z$ from \eqref{eq:z} in the model equation
\eqref{eq:Y-equation}.
Since $U\ge\inf_M u$, the first positive zero $z_0$ of $Y$ satisfies
$z_0\le\pi/(\sqrt\kappa\,(\inf_M u)^\eta)$.
Lemma~\ref{lem:root-comparison} shows that the cut time occurs no later than the
corresponding $g$-arclength time.
On the other hand,
$z(s)=\int_0^sU^{-\eta}(t)\,dt\ge s/(\sup_M u)^\eta$.
Consequently every minimizing $h_\alpha$-geodesic has $g$-length at
most the right-hand side of \eqref{eq:diameter-bound}.
For any two points, their $g$-distance is no larger than the $g$-length of
such a curve, proving the diameter estimate.
Since $g$ is complete and $M$ has finite diameter, the Hopf--Rinow theorem
implies that $M$ is compact.
\end{proof}

\begin{proof}[Proof of Corollary~\ref{cor:intro-AX}]
Lift $g$ and $u$ to the universal cover.
The lifted metric is complete, the lifted function has a positive minimum
and finite maximum, and \eqref{eq:spectral} remains valid.
Lemma~\ref{lem:diameter} proves \eqref{eq:AX-diameter} and shows that
$\widetilde M$ is compact.
A compact universal cover has only finitely many deck transformations, so
$\pi_1(M)$ is finite.

Choose a lift $o$ of a minimum point of $u$.
The metrics $g$ and $h_\alpha$ are uniformly equivalent on
$\widetilde M$.
Consequently, for sufficiently large $R$, the radial ball
$\Omega_R^\alpha(o)$ covers $\widetilde M$ up to the cut locus.
Letting $R\to\infty$ in \eqref{eq:minimum-growth} gives
\begin{align*}
 \operatorname{Vol}_{\widetilde g}(\widetilde M)
 &\le
 \omega_{n-1}\int_0^{\pi/\sqrt\kappa}s_\kappa^{\,n-1}(t)\,dt
 =\kappa^{-n/2}\operatorname{Vol}(\mathbb S^n).
\end{align*}
This proves \eqref{eq:AX-volume}.

If equality holds in \eqref{eq:AX-volume}, inspection of the proof of
\eqref{eq:minimum-growth} shows that $\widetilde u$ is constant.
Therefore $u$ is constant.
The curvature assumption reduces to
$\operatorname{Ric}_g\ge(n-1)\kappa g$.
Rigidity in the classical Bishop volume comparison then identifies
$\widetilde M$ with the round sphere of radius $\kappa^{-1/2}$.
\end{proof}

\section{The stable Bernstein theorem in
\texorpdfstring{$\R^4$}{R4}}\label{sec:bernstein}

\begin{proof}[Proof of Corollary~\ref{cor:bernstein}]
By stability, there is a smooth positive function $u$ satisfying
$\Delta_gu+|A|^2u=0$; see
Fischer-Colbrie~\cite[Proposition~1]{FischerColbrie1985} and Catino,
Mastrolia, and Roncoroni~\cite[Section~2.1]{CatinoMastroliaRoncoroni2024}.
Set $h:=u^{4/3}g$ and normalize $u(o)=1$ at a fixed base point.
The Euclidean Gauss equation and the algebraic estimate for a trace-free
symmetric tensor give
$\operatorname{Ric}_g-(2/3)(\Delta_gu/u)g=-A^2+(2/3)|A|^2g\ge0$; see also
Cabr{\'e}, Catino, Mari, Mastrolia, and
Roncoroni~\cite[(2.2)]{CabreCatinoMariMastroliaRoncoroni2026}.
Moreover, Catino, Mastrolia, and
Roncoroni~\cite[Lemma~2.3]{CatinoMastroliaRoncoroni2024} prove that the
metric $h=u^{4/3}g$ is complete.

The parameters $n=3$, $\alpha=p=2/3$, and $\sigma=2$ satisfy the
admissibility conditions in Theorem~\ref{thm:polynomial}, and
formula~\eqref{eq:K} gives $K=2+\sqrt2$.
Since the corresponding mixed radial regions are the $h$-balls and
$u^2dV_g=dV_h$, the theorem gives
\begin{equation}\label{eq:Bernstein-volume}
 \operatorname{Vol}_h(B_R^h(o))
 \le C_o(1+R^{2+\sqrt2})
 \qquad(R>0).
\end{equation}
Corollary~3.7 and equation~(3.9) of
Cabr{\'e}, Catino, Mari, Mastrolia, and
Roncoroni~\cite{CabreCatinoMariMastroliaRoncoroni2026} with
$n=3$, $\tau=1/3$, $\delta=2$, and $t=1/6$ give
\begin{equation*}
 \frac13\int_M|A|^4u^{-2/3}\psi^2\,dV_g
 \le\int_M|A|^2u^{-2/3}|\nabla^g\psi|_g^2\,dV_g.
\end{equation*}
Taking $\psi=\zeta^2$, applying H\"older's inequality, and using
$dV_h=u^2dV_g$ and
$|\nabla^h\zeta|_h=u^{-2/3}|\nabla^g\zeta|_g$ yield
\begin{equation}\label{eq:Bernstein-L4}
 \int_M\left(\frac{|A|}{u^{2/3}}\right)^4\zeta^4\,dV_h
 \le144\int_M|\nabla^h\zeta|_h^4\,dV_h
\end{equation}
for every compactly supported Lipschitz function $\zeta$.

By completeness, choose an $h$-distance cutoff $\zeta_R$ that equals one on
$B_R^h(o)$, vanishes outside $B_{2R}^h(o)$, and satisfies
$|\nabla^h\zeta_R|_h\le2/R$ almost everywhere.
Combining \eqref{eq:Bernstein-volume} and \eqref{eq:Bernstein-L4}, we obtain
\begin{align*}
 \int_{B_R^h(o)}
 \left(\frac{|A|}{u^{2/3}}\right)^4\,dV_h
 &\le\frac{C}{R^4}\operatorname{Vol}_h(B_{2R}^h(o))
 \notag\\
 &\le C\left(R^{-4}+R^{\sqrt2-2}\right)
 \longrightarrow0.
\end{align*}
Hence $A\equiv0$.
The immersion is therefore a local isometry into an affine three-plane, and
completeness implies that its image is the entire plane.
\end{proof}

\bibliographystyle{amsalpha}
\bibliography{references_pseudoball_applications}

\end{document}